\documentclass[reqno, 12pt]{amsart}
\usepackage{amsfonts}
\usepackage{mathrsfs}
\usepackage{amssymb,url}
\usepackage{graphicx}

\date{}

\allowdisplaybreaks[4]
\theoremstyle{plain}
\newtheorem{thm}{Theorem}[section]

\newtheorem{lem}[thm]{Lemma}

\newtheorem{rem}{Remark}

\theoremstyle{definition}

\usepackage[colorlinks,linkcolor=blue,urlcolor=red,linktocpage]{hyperref}

\numberwithin{equation}{section}

\begin{document}

\title
[~~~]{ Origin-Symmetric Bodies of Revolution with Minimal Mahler
Volume in $\mathbb{R}^3$-a new proof}

\author[Youjiang Lin]{Youjiang Lin}
\address{School of Mathematical Sciences, Peking University, Beijing, People's Republic of China 100871}
 \email{\href{mailto: YOUJIANG LIN
<lxyoujiang@126.com>}{lxyoujiang@126.com}}
\author[Gangsong Leng]{Gangsong Leng}
\address{Department of Mathematics, Shanghai University, Shanghai, People's Republic of China 200444} \email{\href{mailto:
Gangsong Leng <gleng@staff.shu.edu.cn>}{gleng@staff.shu.edu.cn} }

\begin{abstract} In \cite{Me98}, Meyer and Reisner proved the Mahler conjecture for rovelution bodies.  In
this paper, using a new method, we prove that among \textit{origin-symmetric bodies of
revolution} in $\mathbb{R}^3$, cylinders have the minimal Mahler
volume. Further, we prove that among \textit{parallel sections
homothety bodies} in $\mathbb{R}^3$, 3-cubes have the minimal Mahler
volume.

\end{abstract}

\subjclass[2000]{52A10, 52A40.}

\keywords{Convex body, body of revolution, polar body, Mahler
conjecture, Cylinder.}

\thanks{The authors would like to acknowledge the
support from the 973 Program 2013CB834201, National Natural Science
Foundation of China under grant 11271244. }

\maketitle

\section{Introduction}

The well-known  Mahler's conjecture (see, e.g.,\cite{Ga06},
\cite{Ma39}, \cite{Sc93} for references) states that, for any
origin-symmetric convex body $K$ in $\mathbb{R}^n$,
\begin{eqnarray}
\mathcal{P}(K)\geq \mathcal{P}(C^n)=\frac{4^n}{n!} ,
\end{eqnarray}
where $C^n$ is an $n$-cube and $\mathcal{P}(K)=Vol(K)Vol(K^{\ast})$,
which is known as the {\it Mahler volume} of $K$.

For $n=2$, Mahler \cite{Ma39b} himself proved the conjecture, and in
1986 Reisner \cite{Re86} showed that equality holds only for
parallelograms.  For $n=2$, a new proof of inequality (1.1) was
obtained by Campi and Gronchi \cite{CG06}. Recently, Lin and Leng
\cite{LL10} gave a new and intuitive proof of the inequality (1.1)
in $\mathbb{R}^2$.

For some special classes of origin-symmetric convex bodies in
$\mathbb{R}^n$, a sharper estimate for the lower bound of $\mathcal
{P}(K)$ has been obtained. If $K$ is a convex body which is
symmetric around all coordinate hyperplanes, Saint Raymond
\cite{SR81} proved that $\mathcal {P}(K)\geq 4^n/n!$; the equality
case was discussed in \cite{Me86, Re87}. When $K$ is a zonoid
(limits of finite Minkowski sums of line segments), Meyer and
Reisner (see, e.g., \cite{GMR88, Re85, Re86}) proved that the same
inequality holds, with equality if and only if $K$ is an $n$-cube.
For the case of polytopes with at most $2n+2$ vertices (or facets)
(see, e.g., \cite{Ba95} for references), Lopez and Reisner
\cite{LR98} proved the inequality (1.1) for $n\leq 8$ and the
minimal bodies are characterized. Recently, Nazarov, Petrov,
Ryabogin and Zvavitch \cite{NPRZ09} proved that the cube is a strict
local minimizer for the Mahler volume in the class of
origin-symmetric convex bodies endowed with the Banach-Mazur
distance.

Bourgain and Milman \cite{BM87} proved that there exists
 a universal constant $c>0$ such that $\mathcal {P}(K)\geq c^n \mathcal{P}(B)$,
which is now known as the reverse Santal\'{o} inequality. Very
recently, Kuperberg \cite{Ku08} found a beautiful new approach to
the reverse Santal\'{o} inequality. What's especially remarkable
about Kuperberg's inequality is that it provides an explicit value
for $c$.

Another variant of the Mahler conjecture without the assumption of
origin-symmetry states that, for any convex body $K$ in
$\mathbb{R}^n$,
\begin{eqnarray}
\mathcal{P}(K)\geq \frac{(n+1)^{(n+1)}}{(n!)^2},
\end{eqnarray}
with equality conjectured to hold only for simplices. For $n=2$,
Mahler himself proved this inequality in 1939 (see, e.g.,\cite{Ca06,
Cam06, LYZ10} for references) and Meyer \cite{Me91} obtained the
equality conditions in 1991. Recently, Meyer and Reisner\cite{MR06}
have proved inequality (1.2) for polytopes with at most $n+3$
vertices. Very recently, Kim and Reisner\cite{Ki} proved that the
simplex is a strict local minimum for the Mahler volume in the
Banach-Mazur space of $n$-dimensional convex bodies.

Strong functional versions of the Blaschke-Santal\'{o} inequality
and its reverse form have been studied recently (see, e.g.,
\cite{Ar04, Fr10, Fr07, Fr08, Fra08,Me98} ).

The Mahler conjecture is still open even in the three-dimensional
case. Terence Tao in \cite{Ta07} made an excellent remark about the
open question.

To state our results, we first give some definitions. In the
coordinate plane XOY of $\mathbb{R}^3$, let
\begin{eqnarray}
D=\{(x,y): -a\leq x\leq a, |y|\leq f(x)\},
\end{eqnarray}
where  $f(x)$ ($[-a,a]$, $a>0$) is a concave, even and nonnegative
function. An {\it origin-symmetric body of revolution} $R$ is
defined as the convex body generated by rotating $D$ around the
$X$-axis in $\mathbb{R}^3$. $f(x)$ is called its {\it generating
function} and $D$ is its {\it generating domain}. If the generating
domain of $R$ is a rectangle (the generating function of $R$ is a
constant function),  $R$ is called a {\it cylinder}. If the
generating domain of $R$ is a diamond (the generating function
$f(x)$ of $R$ is a linear function on $[-a,0]$ and $f(-a)=0$), $R$
is called a {\it bicone}.


In this paper, we prove that cylinders have the minimal Mahler
volume for  origin-symmetric bodies of revolution in $\mathbb{R}^3$.

\begin{thm}For any origin-symmetric body of revolution
$K$ in $\mathbb{R}^3$, we have
\begin{eqnarray}
\mathcal {P}(K)\geq \frac{4\pi^2}{3},
\end{eqnarray}
and the equality holds if and only if $K$ is a cylinder or bicone.
\end{thm}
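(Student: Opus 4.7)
The Mahler product $\mathcal{P}(K)$ is invariant under the two independent rescalings $(x,y,z)\mapsto(\lambda x,y,z)$ and $(x,y,z)\mapsto(x,\mu y,\mu z)$, both of which preserve rotational symmetry about the $x$-axis, so I would first normalize $a=1$ and $f(0)=1$. A direct polar computation shows that the generating domain of $K^{*}$ is the two-dimensional polar $D^{*}$ of $D$ in the $xy$-plane, and hence $K^{*}$ is again a body of revolution, with generating function
\[
g(u)=\min_{0\le x\le 1,\;f(x)>0}\frac{1-xu}{f(x)},\qquad u\in[0,1].
\]
Since $\mathrm{Vol}(K)=2\pi\int_{0}^{1}f(x)^{2}\,dx$ and $\mathrm{Vol}(K^{*})=2\pi\int_{0}^{1}g(u)^{2}\,du$, the theorem is equivalent to the one-dimensional inequality $\int_{0}^{1}f^{2}\cdot\int_{0}^{1}g^{2}\ge 1/3$, with equality only when $f\equiv 1$ or $f(x)=1-x$.

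\textbf{Reduction and base case.} By approximating a general concave even $f$ uniformly by piecewise linear ones and using continuity of $\mathcal{P}$, I would reduce to $f$ piecewise linear with finitely many breakpoints $0=x_{0}<x_{1}<\cdots<x_{n}=1$, then induct on $n$. For the base case $n=1$ one has $f(x)=1-(1-c)x$ with $c=f(1)\in[0,1]$, and the polar formula yields $g(u)=\min\bigl(1,(1-u)/c\bigr)$. Direct computation gives $\int_{0}^{1}f^{2}=(1+c+c^{2})/3$ and $\int_{0}^{1}g^{2}=1-2c/3$, whose product simplifies to
\[
\frac{(1+c+c^{2})(3-2c)}{9}=\frac{1}{3}+\frac{c(1-c)(1+2c)}{9}\ge\frac{1}{3},
\]
with equality if and only if $c\in\{0,1\}$, corresponding respectively to the bicone and the cylinder.

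\textbf{Inductive step.} For $n\ge 2$, $g$ is the lower envelope of the lines $u\mapsto(1-x_{i}u)/y_{i}$ attached to the positive vertices $y_{i}=f(x_{i})$ of the generating curve. I would fix one interior breakpoint $x_{j}$ and vary $t=f(x_{j})$ over the maximal closed interval $[t_{\min},t_{\max}]$ on which $f$ stays concave. At $t_{\min}$ the point $(x_{j},t)$ becomes collinear with its neighbors $(x_{j\pm1},y_{j\pm1})$, so the breakpoint disappears; at $t_{\max}$ the slope on one adjacent piece equalizes with the next slope, and again one breakpoint is lost. Along the family $\int_{0}^{1}f^{2}$ is an increasing convex quadratic in $t$, while $\int_{0}^{1}g^{2}$ is a decreasing piecewise-rational function of $t$. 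The plan is to show that the product attains its minimum over $[t_{\min},t_{\max}]$ at one of the two endpoints, so that one can always trade the current $f$ for one with strictly fewer breakpoints and a no-larger Mahler volume; iterating reduces to $n=1$ and hence to the base case.

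\textbf{Main obstacle.} The principal difficulty is in controlling $\int_{0}^{1}g^{2}$ as $t$ varies: the ``active'' vertex of $D$ governing $g(u)$ for a given $u$ depends combinatorially on $t$, so the partition of $[0,1]$ into sub-intervals on which different vertices realize the minimum rearranges at finitely many critical values of $t$. One must identify these transitions, write $\int g^{2}$ as a piecewise-rational function of $t$ on each resulting sub-interval, and then compare its behaviour against the convex quadratic $\int f^{2}$ in order to establish the boundary-minimum property; once this combinatorial bookkeeping is in place, the remaining verification is elementary algebra, and the base case completes the proof.
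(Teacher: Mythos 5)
Your setup is sound and matches the paper's reductions: the normalization $a=1$, $f(0)=1$, the identification of $K^{*}$ as the body of revolution generated by the two-dimensional polar of the generating domain (the paper's Lemmas 3.2--3.3), the reduction to piecewise linear $f$ by Hausdorff approximation, and your base case computation for the trapezoidal family (which is correct, and correctly identifies $c\in\{0,1\}$, i.e.\ cylinder and bicone, as the equality cases). But the proof has a genuine gap exactly where the real work lies: the inductive step. You assert, as a ``plan,'' that along the one-parameter family $t=f(x_j)\in[t_{\min},t_{\max}]$ the product $\bigl(\int_0^1 f^2\bigr)\bigl(\int_0^1 g^2\bigr)$ is minimized at an endpoint, but you give no argument. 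This is not a routine verification: the product of an increasing function and a decreasing function need not attain its minimum at an endpoint, and the paper's entire technical core (Lemmas 3.4 and 3.5) is devoted to proving precisely such an endpoint-minimum property for its deformation, via explicit formulas for $F'(t)$ and $F''(t)$, a partition of the admissible $(x_0,y_0)$-region into $\mathcal{D}_1$ and $\mathcal{D}_2$ with sign estimates for $F'$ and $F''$ at the right endpoint, and a three-case monotonicity/concavity analysis. Note also that the paper does not use your deformation: it slides the topmost vertex $A_1=(-t,1)$ horizontally along $y=1$, which keeps the polar's dependence on $t$ very clean (a single edge $y=tx+1$ through $(0,1)$, with only one polar vertex moving), and it uses $\mathcal{P}(R)=\mathcal{P}(R^{*})$ to always return to that configuration; your vertical perturbation of an interior vertex makes both the polar's geometry and the required sign analysis messier, and there is no a priori guarantee it admits an equally tractable case structure. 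Until you actually establish the endpoint-minimum property for your family, the argument is a program, not a proof.

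A secondary, fixable flaw: your claim that a breakpoint is always lost at $t_{\max}$ fails when $j=1$ and the binding constraint is the evenness/concavity condition at $x=0$ (i.e.\ $t_{\max}=f(0)=1$, the segment $[0,x_1]$ becoming horizontal); in that case no interior breakpoint of $f$ on $[0,1]$ disappears, so the induction as stated need not terminate. You would need either to choose the varied vertex more carefully, to pass to the polar (as the paper does) when this configuration arises, or to handle the flat-top case separately. Finally, to get the stated equality characterization you must also track strictness of the inequality through the induction and the polygonal approximation, not only in the base case.
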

\begin{rem}
 In \cite{Me98}, for the Schwarz rounding $\tilde{K}$ of
a convex body $K$ in $\mathbb{R}^n$, Meyer and Reisner gave a lower
bound for $\mathcal {P}(\tilde{K})$. Especially, for a general body
of revolution $K$ in $\mathbb{R}^3$, they proved
\begin{eqnarray}
\mathcal {P}(K)\geq \frac{4^4\pi^2}{3^5},
\end{eqnarray}
with equality if and only if $K$ is a cone and $|AO|/|AD|=3/4$
{\rm(}where, $A$ is the vertex of the cone and $AD$ is the height
and $O$ is the Santal\'{o} point of $K${\rm)}.
\end{rem}

The following Theorem $1.2$ is the functional version of the Theorem
1.1.
\begin{thm} Let $f(x)$  be a concave, even and nonnegative
function defined on $[-a,a]$, $a>0$, and for $x^{\prime}\in
[-\frac{1}{a},\frac{1}{a}]$ define
$$f^{\ast}(x^{\prime})=\inf_{x\in[-a,a]}\frac{1-x^{\prime}x}{f(x)}.$$
Then, we have
\begin{eqnarray}
\left(\int_{-a}^{a} (f(x))^2 dx\right)\left(
\int_{-\frac{1}{a}}^{\frac{1}{a}} (f^{\ast}(x^{\prime}))^2
dx^{\prime}\right)\geq\frac{4}{3},
\end{eqnarray}
with equality if and if $f(x)=f(0)$ or
$f^{\ast}(x^{\prime})=1/f(0)$.
\end{thm}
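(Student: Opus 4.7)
The plan is to derive Theorem 1.2 directly from Theorem 1.1 by recognizing the two integrals as (up to a factor of $\pi$) the volume of a body of revolution and the volume of its polar. Given $f$ as in the hypothesis, let $D$ be the generating domain defined in (1.3) and let $R\subset\mathbb{R}^{3}$ be the origin-symmetric body of revolution obtained by rotating $D$ about the $X$-axis. By the disk method,
$$
\mathrm{Vol}(R)=\pi\int_{-a}^{a}f(x)^{2}\,dx,
$$
so the first factor on the left of (1.6) is $\mathrm{Vol}(R)/\pi$.

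The main identification I would carry out is that $R^{\ast}$ is itself a body of revolution about the $X$-axis whose generating function is exactly $f^{\ast}$. Indeed, a point $(x',y',z')$ lies in $R^{\ast}$ iff $xx'+yy'+zz'\le 1$ for every $(x,y,z)\in R$. For each fixed $x\in[-a,a]$ the slice of $R$ is the disk $y^{2}+z^{2}\le f(x)^{2}$, so maximizing $yy'+zz'$ over this disk gives the equivalent condition
$$
xx'+f(x)\sqrt{y'^{2}+z'^{2}}\le 1\qquad\text{for all }x\in[-a,a].
$$
Solving for $\sqrt{y'^{2}+z'^{2}}$ (and observing that the requirement $\inf_{x}(1-xx')/f(x)\ge 0$ forces $x'\in[-1/a,1/a]$, since this is the projection of $R$ onto the $X$-axis) one obtains that the generating function of $R^{\ast}$ is
$$
\inf_{x\in[-a,a]}\frac{1-xx'}{f(x)}=f^{\ast}(x').
$$
Applying the disk method again yields $\mathrm{Vol}(R^{\ast})=\pi\int_{-1/a}^{1/a}f^{\ast}(x')^{2}\,dx'$, so the left side of (1.6) equals $\mathcal{P}(R)/\pi^{2}$.

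Theorem 1.1 then gives $\mathcal{P}(R)\ge 4\pi^{2}/3$, which is precisely (1.6). For equality, Theorem 1.1 says $R$ must be a cylinder or a bicone. $R$ is a cylinder exactly when $f(x)\equiv f(0)$ on $[-a,a]$; $R$ is a bicone exactly when $R^{\ast}$ is a cylinder, i.e.\ when $f^{\ast}$ is constant on $[-1/a,1/a]$, and this constant must equal $f^{\ast}(0)=\inf_{x}1/f(x)=1/f(0)$, since by evenness and concavity $f$ attains its maximum $f(0)$ at the origin. This matches the equality condition stated in the theorem. The only real subtlety in the argument is the identification of the generating function of $R^{\ast}$ and the correct domain $[-1/a,1/a]$; once this is in hand the inequality and its equality cases descend immediately from Theorem 1.1 with no further analysis.
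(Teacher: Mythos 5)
Your proposal is correct and takes essentially the same route as the paper: identify the left-hand side of (1.6) as $\mathcal{P}(R)/\pi^{2}$ for the body of revolution $R$ generated by $f$, show that $R^{\ast}$ is the body of revolution generated by $f^{\ast}$, and then apply Theorem 1.1 together with its equality cases (cylinder $\leftrightarrow$ $f\equiv f(0)$, bicone $\leftrightarrow$ $f^{\ast}\equiv 1/f(0)$). The only cosmetic difference is that you establish $R^{\ast}$ directly in $\mathbb{R}^{3}$ by maximizing $yy'+zz'$ over each disk slice, whereas the paper proves the planar polarity $D^{\prime}=D^{\ast}$ of the generating domains and lifts it to $R^{\prime}=R^{\ast}$ via Lemma 3.2.
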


Let $C$ be an origin-symmetric convex body in the  coordinate plane
YOZ of $\mathbb{R}^3$ and $f(x)$ ($x\in [-a,a]$, $a>0$) is a
concave, even and nonnegative function. A {\it parallel sections
homothety body} is defined as the convex body
$$K=\bigcup_{x\in[-a,a]}\{f(x)C+xv\},$$
where $v=(1,0,0)$ is a unit vector in the positive direction of the
X-axis, $f(x)$ is called its {\it generating function} and $C$ is
its {\it homothetic section}.


Applying Theorem 1.2, we prove that among parallel sections
homothety bodies in $\mathbb{R}^3$, 3-cubes have the minimal Mahler
volume.

\begin{thm}For any parallel sections
homothety body $K$ in $\mathbb{R}^3$, we have
\begin{eqnarray}
\mathcal {P}(K)\geq \frac{4^3}{3!},
\end{eqnarray}
and the equality holds if and only if $K$ is a 3-cube or octahedron.
\end{thm}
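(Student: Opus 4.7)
The plan is to show that $K^{\ast}$ is itself a parallel sections homothety body (with cross-section $C^{\ast}$ and profile $f^{\ast}$), so that the Mahler volume factorises as a product of a two-dimensional Mahler volume and the one-dimensional integral appearing in Theorem 1.2. Concretely, by Fubini,
$$\mathrm{Vol}(K)=\int_{-a}^{a}\mathrm{Area}(f(x)C)\,dx=\mathrm{Area}(C)\int_{-a}^{a}f(x)^{2}\,dx.$$

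The key step is identifying $K^{\ast}$. A point $(u_{0},u_{1},u_{2})$ lies in $K^{\ast}$ iff $xu_{0}+\langle(y,z),(u_{1},u_{2})\rangle\le 1$ for every $(x,y,z)\in K$. Fixing $x$ and taking the supremum of the second term over $f(x)C$ replaces it by $f(x)h_{C}(u_{1},u_{2})$, so the condition becomes $xu_{0}+f(x)h_{C}(u_{1},u_{2})\le 1$ for all $x\in[-a,a]$. Solving for $h_{C}$ and using that $h_{C}\ge 0$ forces $u_{0}\in[-1/a,1/a]$ and
$$h_{C}(u_{1},u_{2})\le\inf_{x\in[-a,a]}\frac{1-xu_{0}}{f(x)}=f^{\ast}(u_{0}),$$
which is equivalent to $(u_{1},u_{2})\in f^{\ast}(u_{0})C^{\ast}$. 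Hence
$$K^{\ast}=\bigcup_{u_{0}\in[-1/a,1/a]}\bigl(f^{\ast}(u_{0})C^{\ast}+u_{0}v\bigr),$$
and the same Fubini argument gives $\mathrm{Vol}(K^{\ast})=\mathrm{Area}(C^{\ast})\int_{-1/a}^{1/a}f^{\ast}(u_{0})^{2}\,du_{0}$.

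Combining and invoking the planar Mahler inequality $\mathrm{Area}(C)\cdot\mathrm{Area}(C^{\ast})\ge 8$ (with equality only for parallelograms, by Mahler--Reisner) together with Theorem 1.2 yields
$$\mathcal{P}(K)=\bigl[\mathrm{Area}(C)\,\mathrm{Area}(C^{\ast})\bigr]\cdot\!\left(\int_{-a}^{a}\!f(x)^{2}\,dx\right)\!\left(\int_{-1/a}^{1/a}\!f^{\ast}(u_{0})^{2}\,du_{0}\right)\ge 8\cdot\frac{4}{3}=\frac{4^{3}}{3!}.$$
For equality, both factors must be extremal: $C$ must be a parallelogram, and by Theorem 1.2 either $f$ is constant on $[-a,a]$ (so $K$ is a right prism over a parallelogram, i.e.\ a $3$-cube up to an affine transformation) or $f^{\ast}$ is constant (so $K$ is a double pyramid over a parallelogram, i.e.\ an octahedron).

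The main obstacle is the polar identification. One must take care of the possible degeneracy $f(\pm a)=0$, where the defining quotient $(1-xu_{0})/f(x)$ blows up but $f^{\ast}(u_{0})$ remains finite via boundary limits, and one must verify that the ``separation of variables'' that reduces the linear constraint to a single inequality in $h_{C}$ is rigorous on the full support function. Once $K^{\ast}$ is exhibited as a parallel sections homothety body with data $(C^{\ast},f^{\ast})$, everything else is bookkeeping plus two known inequalities, and the equality analysis follows directly from the equality cases of those two inputs; note in particular that affine transformations of $\mathbb{R}^{3}$ that act by a linear map on the $(y,z)$-plane and trivially on $x$ preserve the class of parallel sections homothety bodies, normalise any parallelogram $C$ to a square, and preserve Mahler volume, so the ``cube'' and ``octahedron'' equality cases are the canonical representatives.
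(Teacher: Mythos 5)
Your proposal is correct and follows essentially the same route as the paper: both identify $K^{\ast}$ as the parallel sections homothety body with data $(C^{\ast},f^{\ast})$ (you via the support function $h_C$, the paper via a two-sided inclusion with explicit witness points such as $(x_0,f(x_0)y,f(x_0)z)$), then factor the volumes by Fubini and combine the planar Mahler--Reisner inequality $\mathcal{P}(C)\geq 4^2/2!$ with Theorem 1.2 to get $\mathcal{P}(K)\geq \frac{4^3}{3!}$, with the same equality analysis. The only differences are cosmetic (support-function phrasing, and your explicit remarks on the degeneracy $f(\pm a)=0$ and the affine normalization, which the paper leaves implicit).
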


\section{ Definitions, notation, and preliminaries}
As usual, $S^{n-1}$ denotes the unit sphere, and $B^n$ the unit ball
centered at the origin, $O$ the origin and $\|\cdot\|$ the norm in
Euclidean $n$-space $\mathbb{R}^n$. The symbol for the set of all
natural numbers is $\mathbb{N}$. Let $\mathcal {K}^n$ denote the set
of convex bodies (compact, convex subsets with non-empty interiors)
in $\mathbb{R}^n$. Let $\mathcal {K}^n_o$ denote the subset of
$\mathcal {K}^n$ that contains the origin in its interior. For $u\in
S^{n-1}$, we denote by $u^{\perp}$ the $(n-1)$-dimensional subspace
orthogonal to $u$. For $x$, $y\in \mathbb{R}^n$, $ x\cdot y$ denotes
the inner product of $x$ and $y$.

Let $\textrm{int}\;K$ denote the interior of $K$. Let
$\textrm{conv}\;K$ denote the convex hull of $K$. we denote by
$V(K)$ the $n$-dimensional volume of $K$. The notation for the usual
orthogonal projection of $K$ on a subspace $S$ is $K|S$.

If $K\in {K}^n_o$, we define the {\it polar body} $K^{\ast}$ of $K$
by
$$K^{\ast}=\{ x\in \mathbb{R}^n:~ x\cdot y \leq 1~, \forall y\in K\}.$$
\begin{rem}
If $P$ is a polytope, i.e., $P={\rm conv}\{p_1,\cdots,p_m\}$, where
$p_i$ $(i=1,\cdots,m)$ are vertices of polytope $P$. By the
definition of the polar body, we have
\begin{eqnarray}
P^{\ast}&=&\{x\in \mathbb{R}^n: x\cdot p_1\leq 1,\cdots,  x\cdot
p_m\leq 1\}
\nonumber\\
&=&\bigcap_{i=1}^{m}\{x\in \mathbb{R}^n: x\cdot p_i \leq 1\},
\end{eqnarray}
which implies that $P^{\ast}$ is an intersection of $m$ closed
halfspaces with exterior normal vectors $p_i$
{\rm(}$i=1,\cdots,m${\rm)} and the distance of hyperplane $$\{x\in
\mathbb{R}^n: x\cdot p_i= 1\}$$ from
 the origin is $1/\|p_i\|$.
 \end{rem}

Associated with each convex body $K$ in $\mathbb{R}^n$ is its  {\it
support function} $h_K: \mathbb{R}^n\rightarrow [0,\infty)$, defined
for $x\in \mathbb{R}^n$, by
\begin{eqnarray}
h_K(x)= \max\{y\cdot x: y\in K\},
\end{eqnarray}
and its  {\it radial function} $\rho_K:
\mathbb{R}^n\backslash\{0\}\rightarrow (0,\infty)$, defined for
$x\neq 0$, by
\begin{eqnarray}
\rho_K(x)= \max\{\lambda\geq 0~:~\lambda x\in K\}.
\end{eqnarray}
%
%
%

For $K$, $L\in \mathcal{K}^n$, the {\it Hausdorff distance} is
defined by
\begin{eqnarray}
\delta(K,L)=\min\{\lambda\geq0:~K\subset L+\lambda B^n,~L\subset
K+\lambda B^n\}.
\end{eqnarray}

A  {\it linear transformation} (or  {\it affine transformation}) of
$\mathbb{R}^n$ is a map $\phi$ from $\mathbb{R}^n$ to itself such
that $\phi x~=~A x$ (or $\phi x~=~A x + t$, respectively), where $A$
is an $n \times n$ matrix and $t\in \mathbb{R}^n$. It is known that
Mahler volume of $K$ is invariant under affine transformation.

 For $K\in \mathcal {K}^n_o$, if
$(x_1,x_2,\cdots,x_n)\in K$, we have
$(\varepsilon_1x_1,\cdots,\varepsilon_nx_n)\in K$ for any signs
$\varepsilon_i=\pm1$ ($i=1,\cdots,n$), then $K$ is a {\it
1-unconditional convex body}. In fact, $K$ is symmetric with respect
to all coordinate planes.

The following Lemma 2.1 will be used to calculate the volume of an
origin-symmetric body of revolution. Since the lemma is an
elementary conclusion in calculus, we omit its proof.
\begin{lem}
In the coordinate plane XOY,  let
$$D=\{(x,y): a\leq x\leq b, |y|\leq f(x)\},$$
where $f(x)$ is a linear, nonnegative function defined on $[a,b]$.
Let $R$ be a body of revolution generated by $D$. Then
\begin{eqnarray}
V(R)=\frac{\pi}{3}(b-a)\left[f(a)^2+f(a)f(b)+f(b)^2\right].
\end{eqnarray}

\end{lem}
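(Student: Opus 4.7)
The plan is to apply the standard disk (Cavalieri) method to the solid of revolution. Since cross-sections of $R$ by planes perpendicular to the $X$-axis are disks of radius $f(x)$, we have
$$V(R) = \pi \int_a^b f(x)^2 \, dx.$$
Because $f$ is linear on $[a,b]$, I would write it in the explicit form $f(x) = f(a) + c(x-a)$ with slope $c = (f(b)-f(a))/(b-a)$, and compute the integral directly.

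If $c \neq 0$ (the generic frustum case), substitute $u = f(a) + c(x-a)$, so $du = c\, dx$ and $u$ ranges from $f(a)$ to $f(b)$; then
$$\int_a^b f(x)^2 \, dx = \frac{1}{c}\int_{f(a)}^{f(b)} u^2 \, du = \frac{f(b)^3 - f(a)^3}{3c}.$$
Now factor $f(b)^3 - f(a)^3 = (f(b)-f(a))(f(a)^2 + f(a)f(b) + f(b)^2)$ and use $(f(b)-f(a))/c = b-a$ to obtain
$$V(R) = \frac{\pi(b-a)}{3}\bigl[f(a)^2 + f(a)f(b) + f(b)^2\bigr].$$
In the degenerate case $c = 0$ (a cylinder), $f(x) \equiv f(a) = f(b)$ and the integral gives $\pi f(a)^2 (b-a)$, which matches the formula since the bracket becomes $3f(a)^2$.

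There is essentially no obstacle here: the lemma is a direct computation, which is why the authors state it without proof. The only minor point is to notice that the formula is uniformly valid across frustum, cone ($f(a)=0$ or $f(b)=0$), and cylinder ($f(a)=f(b)$) cases, which the factored form $(f(b)^3 - f(a)^3)/(f(b)-f(a))$ handles automatically.
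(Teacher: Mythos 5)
Your proof is correct and is exactly the elementary disk-method computation the paper has in mind when it states the lemma without proof ("an elementary conclusion in calculus"); the case handling for $c=0$ is a nice touch but not essential.
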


\section{Main result and its proof}

In the paper, we consider convex bodies in a three-dimensional
Cartesian coordinate
 system with origin $O$ and
 its three coordinate axes are denoted by $X$-axis, $Y$-axis,
  and $Z$-axis.

\begin{lem} If $K\in \mathcal {K}_0^3$,
then for any $u\in S^2$, we have
\begin{eqnarray}
K^{\ast}\cap u^{\perp}=(K|u^{\perp})^{\ast}.
\end{eqnarray}
On the other hand, if $K^{\prime}\in\mathcal {K}_0^3$ satisfies
\begin{eqnarray}
K^{\prime}\cap u^{\perp}=(K|u^{\perp})^{\ast}
\end{eqnarray}
for any $u\in S^2\cap v_0^{\perp}$ {\rm (}$v_0$ is a fixed
vector{\rm )}, then,
\begin{eqnarray}
K^{\prime}=K^{\ast}.
\end{eqnarray}
\end{lem}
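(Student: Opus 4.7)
The plan is to prove the two assertions separately, with the second deducing from the first.

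\textbf{Part (1):} I would establish the identity $K^{\ast}\cap u^{\perp}=(K|u^{\perp})^{\ast}$ by a direct double-inclusion argument from the definitions (where the polar on the right is taken inside the hyperplane $u^{\perp}$). For $\subseteq$: given $x\in K^{\ast}\cap u^{\perp}$ and any $z\in K|u^{\perp}$, write $z$ as the orthogonal projection of some $y\in K$, so $y=z+tu$; since $x\perp u$, we get $x\cdot z = x\cdot y \leq 1$, hence $x\in (K|u^{\perp})^{\ast}$. For $\supseteq$: given $x\in u^{\perp}$ with $x\cdot z \leq 1$ for every $z\in K|u^{\perp}$, any $y\in K$ decomposes as $y=(y|u^{\perp}) + (y\cdot u)u$, and again $x\perp u$ kills the second term, yielding $x\cdot y \leq 1$ and so $x\in K^{\ast}\cap u^{\perp}$.

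\textbf{Part (2):} I would combine the hypothesis with part (1) to get $K'\cap u^{\perp}=K^{\ast}\cap u^{\perp}$ for every $u\in S^2\cap v_0^{\perp}$. The geometric key is to observe that as $u$ ranges over $S^2\cap v_0^{\perp}$, the plane $u^{\perp}$ ranges over exactly all $2$-planes through the origin containing the line $\mathbb{R}v_0$ (indeed $u\perp v_0$ forces $v_0\in u^{\perp}$, and conversely any plane through $\mathbb{R}v_0$ is $u^{\perp}$ for its unit normal $u\in v_0^{\perp}$). Since every point of $\mathbb{R}^3$ lies in at least one such plane (take the plane spanned by $v_0$ and the point, or any such plane if the point is on $\mathbb{R}v_0$), we have
\begin{equation*}
\mathbb{R}^3=\bigcup_{u\in S^2\cap v_0^{\perp}} u^{\perp}.
\end{equation*}
Intersecting both $K'$ and $K^{\ast}$ with each such plane and taking the union therefore recovers the full bodies, so $K'=K^{\ast}$.

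\textbf{Main obstacle.} Neither part presents a real difficulty: part (1) is a routine manipulation of the defining inequalities, and part (2) reduces to the elementary observation that the pencil of planes through a fixed line sweeps out all of $\mathbb{R}^3$. The only point requiring a modicum of care is not confusing the polar of the projection, taken intrinsically in $u^{\perp}$, with the polar taken in the ambient $\mathbb{R}^3$; this is what makes the orthogonality condition $x\in u^{\perp}$ essential for the cancellation of the $u$-component in the pairing.
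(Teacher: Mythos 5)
Your proposal is correct and follows essentially the same route as the paper: part (1) is the same double-inclusion argument exploiting that the $u$-component of $y$ is killed in the pairing with $x\in u^{\perp}$, and part (2) is the same observation that the pencil of planes $u^{\perp}$, $u\in S^2\cap v_0^{\perp}$, through the line $\mathbb{R}v_0$ covers $\mathbb{R}^3$ (the paper phrases this via equality of radial functions $\rho_{K^{\prime}}(v)=\rho_{K^{\ast}}(v)$ for every direction $v$, which is the same covering argument). Your remark about taking the polar of $K|u^{\perp}$ intrinsically in $u^{\perp}$ is exactly the right point of care, and is implicit in the paper as well.
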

\begin{proof}

Firstly, we prove (3.1).

 Let $x\in u^{\perp}$, $y\in K$ and
$y^{\prime}=y|u^{\perp}$, since the hyperplane $u^{\perp}$ is
orthogonal to the vector $y-y^{\prime}$,  then $$ y\cdot x=
(y^{\prime}+y-y^{\prime})\cdot x=y^{\prime}\cdot x+
(y-y^{\prime})\cdot x= y^{\prime}\cdot x.$$

 If $x\in K^{\ast}\cap u^{\perp}$, for any $y^{\prime}\in
K|u^{\perp}$, there exists $y\in K$ such that
$y^{\prime}=y|u^{\perp}$, then $x\cdot y^{\prime}=x\cdot y\leq 1$,
thus $x\in (K|u^{\perp})^{\ast}$. Thus, we have $K^{\ast}\cap
u^{\perp}\subset (K|u^{\perp})^{\ast}$.

 If $x\in (K|u^{\perp})^{\ast}$,
then for any $y\in K$ and $y^{\prime}=y|u^{\perp}$, $ x\cdot
y=x\cdot y^{\prime}\leq 1$, thus $x\in K^{\ast}$, and since $x\in
u^{\perp}$, thus $x\in K^{\ast}\cap u^{\perp}$. Thus, we have
$(K|u^{\perp})^{\ast}\subset K^{\ast}\cap u^{\perp}$.

Next we prove (3.3).

 Let $S^1= S^2\cap v_0^{\perp}$. For any vector $v\in S^2$, there exists a $u\in S^1$
satisfying $v\in u^{\perp}$. Since $K^{\prime}\cap
u^{\perp}=(K|u^{\perp})^{\ast}$ and $K^{\ast}\cap
u^{\perp}=(K|u^{\perp})^{\ast}$, thus $K^{\prime}\cap
u^{\perp}=K^{\ast}\cap u^{\perp}$. Hence, we have
$\rho_{K^{\prime}}(v)=\rho_{K^{\ast}}(v)$.  Since $v\in S^2$ is
arbitrary, we get $K^{\prime}=K^{\ast}.$
\end{proof}
\begin{lem} In the coordinate plane XOY, let $P$
be a 1-unconditional convex body. Let $R$ and $R^{\prime}$ be two
origin-symmetric bodies of revolution generated by $P$ and
$P^{\ast}$, respectively. Then $R^{\prime}=R^{\ast}$.
\end{lem}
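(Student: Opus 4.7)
The plan is to invoke Lemma 3.1 with $v_0 = (1,0,0)$, the unit vector along the $X$-axis. Then $S^2 \cap v_0^{\perp}$ is the great circle of unit vectors in the $YOZ$-plane, and for every such $u$ the plane $u^{\perp}$ contains the $X$-axis. Writing $\Pi = u^{\perp}$, the task reduces to establishing
$$R^{\prime} \cap \Pi = (R|\Pi)^{\ast}$$
for every such $\Pi$, after which Lemma 3.1 automatically yields $R^{\prime} = R^{\ast}$.

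I would exploit the rotational symmetry of $R$ and $R^{\prime}$ about the $X$-axis. Let $\phi$ denote the Euclidean rotation about the $X$-axis sending $XOY$ onto $\Pi$; by the very definition of bodies of revolution we have $R \cap \Pi = \phi(P)$ and $R^{\prime} \cap \Pi = \phi(P^{\ast})$. The 1-unconditionality of $P$ (and hence of $P^{\ast}$) guarantees that these rotations are well-defined independent of the choice of $\phi$ among the two rotations carrying $XOY$ to $\Pi$. Everything then hinges on the identity $R|\Pi = R \cap \Pi$. This is a quick slice-by-slice check: the horizontal disc of radius $f(x_0)$ at height $x = x_0$ lies in a plane parallel to $YOZ$ and therefore contains the direction $u$, so its orthogonal projection along $u$ onto $\Pi$ equals its diameter in the line $\Pi \cap \{x = x_0\}$, which is also its intersection with $\Pi$.

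With $R|\Pi = \phi(P)$ in hand, the desired identity follows because $\phi$ is a Euclidean isometry fixing the origin and therefore commutes with two-dimensional polar duality inside $\Pi$: $(R|\Pi)^{\ast} = \phi(P)^{\ast} = \phi(P^{\ast}) = R^{\prime} \cap \Pi$. The step I expect to require the most care is precisely this section-equals-projection identity $R|\Pi = R \cap \Pi$ for a body of revolution cut by a plane through its axis; once it is established, the rest of the argument is bookkeeping about transporting the 2D polar from $XOY$ to $\Pi$ via the isometry $\phi$, followed by a direct appeal to Lemma 3.1.
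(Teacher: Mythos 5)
Your argument is correct and follows essentially the same route as the paper: take $v_0=(1,0,0)$, observe that for planes $u^{\perp}$ through the axis of revolution one has $R|u^{\perp}=R\cap u^{\perp}$ with the sections of $R$ and $R'$ being rotated copies of $P$ and $P^{\ast}$, and then apply Lemma 3.1. The only difference is that you spell out the slice-by-slice projection-equals-section check and the fact that the rotation $\phi$ commutes with planar polarity, details the paper states without elaboration.
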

\begin{proof}
Let $v_0=\{1,0,0\}$ and $S^1=S^2\cap v_0^{\perp}$, for any $u\in
S^1,$ we have $R|u^{\perp}=R\cap u^{\perp}$. Since $R^{\prime}\cap
u^{\perp}=P^{\ast}=(R\cap u^{\perp})^{\ast}$ for any $u\in S^1$,
thus $R^{\prime}\cap u^{\perp}=(R| u^{\perp})^{\ast}$ for any $u\in
S^1$. By Lemma 3.1, we have $R^{\prime}=R^{\ast}$.
\end{proof}

\begin{lem} For any origin-symmetric body of revolution $R$, there exists  a linear
transformation $\phi$ satisfying

(i) $\phi R$ is an origin-symmetric body of revolution;

(ii) $\phi R\subset C^3=[-1,1]^3,$ where $C^3$ is the unit cube in
$\mathbb{R}^3$.
\end{lem}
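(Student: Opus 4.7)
The plan is to exhibit an explicit diagonal linear map that simultaneously normalizes the axial extent and the maximal radius of $R$, while preserving the rotational symmetry about the $X$-axis.

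First I would write down the generating data for $R$. By definition, there exist $a>0$ and a concave, even, nonnegative function $f$ on $[-a,a]$ such that $R$ is obtained by rotating the domain $D=\{(x,y):-a\le x\le a,\ |y|\le f(x)\}$ around the $X$-axis. Since $f$ is concave and even on $[-a,a]$, it attains its maximum at $0$; set $r=f(0)>0$. Thus $R\subset[-a,a]\times rB^2$, where $B^2$ is the unit disk in the $YZ$-plane.

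Next I would define the linear transformation $\phi:\mathbb{R}^3\to\mathbb{R}^3$ by the diagonal matrix
\begin{equation*}
\phi=\operatorname{diag}\!\left(\tfrac{1}{a},\tfrac{1}{r},\tfrac{1}{r}\right).
\end{equation*}
For part (i), I would verify that $\phi R$ is again an origin-symmetric body of revolution. Because the $Y$- and $Z$-coordinates are scaled by the same factor $1/r$, cross-sections of $R$ perpendicular to the $X$-axis (which are disks) are sent to disks; equivalently, $\phi R$ is generated by rotating the domain $\{(x,y):-1\le x\le 1,\ |y|\le \tilde f(x)\}$ around the $X$-axis, where $\tilde f(x)=f(ax)/r$. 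The function $\tilde f$ is concave, even, and nonnegative on $[-1,1]$, so $\phi R$ satisfies the definition in (1.3).

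For part (ii), I would observe that for every $x\in[-1,1]$ the radius $\tilde f(x)=f(ax)/r\le f(0)/r=1$, so the cross-section of $\phi R$ at $x$ is contained in the unit disk of the $YZ$-plane; in particular $\phi R\subset[-1,1]\times B^2\subset[-1,1]^3=C^3$. There is no real obstacle here — the statement is essentially a normalization, and the only point to check carefully is that scaling $Y$ and $Z$ by a common factor preserves the body-of-revolution structure, which the diagonal choice above ensures.
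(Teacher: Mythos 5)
Your proposal is correct and follows essentially the same route as the paper: a diagonal map $\operatorname{diag}(1/a,\,1/f(0),\,1/f(0))$, with the observation that equal scaling of the $Y$- and $Z$-coordinates preserves the body-of-revolution structure (the paper verifies this cross-section computation in more detail, obtaining the generating function $cf(x/b)$ before specializing $b=1/a$, $c=1/f(0)$). Your use of concavity and evenness to place the maximum of $f$ at $0$ justifies the inclusion in the unit cube exactly as needed.
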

\begin{proof}
Let $f(x)$ ($x\in [-a,a]$) be the generating function of $R$.

For vector $v=(1,0,0)$ and any $t\in [-a,a],$ the set $R\cap
(v^{\perp}+tv)$ is a disk in the plane $v^{\perp}+tv$ with the point
$(t,0,0)$ as the center and $f(t)$ as the radius.

Next, for a $3\times 3$ diagonal matrix $A=\begin{bmatrix}
b&0&0\\
0&c&0\\
0&0&c
\end{bmatrix},$ where $b, c\in \mathbb{R}^{+}$, let $\phi R=\{Ax:x\in R\}$, we prove
that $\phi R$ is still an origin-symmetric body of revolution.

For $t^{\prime}\in [-ab,ab]$, if
$(t^{\prime},y^{\prime},z^{\prime})\in \phi R\cap
(v^{\perp}+t^{\prime}v)$, there is $(t,y,z)\in R\cap(v^{\perp}+tv)$
satisfying $t^{\prime}=bt,\;\;y^{\prime}=cy,\;\;z^{\prime}=cz.$
Hence, we have
$$\|(t^{\prime},y^{\prime},z^{\prime})-(t^{\prime},0,0)\|=c\|(t,y,z)-(t,0,0)\|\leq
c f(t),$$
which implies that $\phi R\cap (v^{\perp}+t^{\prime}v)\subset
B^{\prime}$, where $B^{\prime}$ is a disk in the plane
$v^{\perp}+t^{\prime}v$ with $(t^{\prime},0,0)$ as the center and
$cf(t^{\prime}/b)$ as the radius.

On the other hand, if $(t^{\prime}, y^{\prime}, z^{\prime})\in
B^{\prime}$, then
$\|(t^{\prime},y^{\prime},z^{\prime})-(t^{\prime},0,0)\|\leq c
f(t^{\prime}/b)$.
Let $t=t^{\prime}/b$, $y=y^{\prime}/c$ and $z=z^{\prime}/c$. Noting
$t^\prime\in [-ab,ab]$, we have $t\in [-a,a]$ and
$$\|(t,y,z)-(t,0,0)\|=\frac{1}{c}\|(t^{\prime},y^{\prime},z^{\prime})-(t^{\prime},0,0)\|\leq
f(t).$$
Hence, we have $(t,y,z)\in R\cap (v^{\perp}+tv)$, which implies that
$(t^{\prime},y^{\prime},z^{\prime})=(bt,cy,cz)\in \phi
R\cap(v^{\perp}+t^{\prime}v)$. Thus, $B^{\prime}\subset \phi R\cap
(v^{\perp}+t^{\prime}v)$. Therefore, we have $ \phi R\cap
(v^{\perp}+t^{\prime}v)= B^{\prime}$. It follows that $\phi R$ is an
origin-symmetric body of revolution and its generating function is
$F(x)=cf(x/b)$, $x\in [-ab,ab]$.

 Set
$b=1/a$ and $c=1/f(0)$, we obtain $\phi R\subset C^3=[-1,1]^3$.

\end{proof}
\begin{rem}
By Lemma 3.3 and the affine invariance of Mahler volume, to prove
our theorems, we need only consider the origin-symmetric body of
revolution $R$ whose generating domain $P$ satisfies $T\subset
P\subset Q$, where
$$T=\{(x,y): |x|+|y|\leq 1\}\;\;\textrm{and}\;\;Q=\{(x,y): \max\{|x|,|y|\}\leq 1\}.$$
\end{rem}

In the following lemmas, let $\triangle ABD$ denote
$\textrm{conv}\{A, B, D\}$, where $A=(-1,1)$, $B=(0,1)$ and
$D=(-1,0)$.

\begin{figure}[htb]
\centering
 \includegraphics[height=10cm]{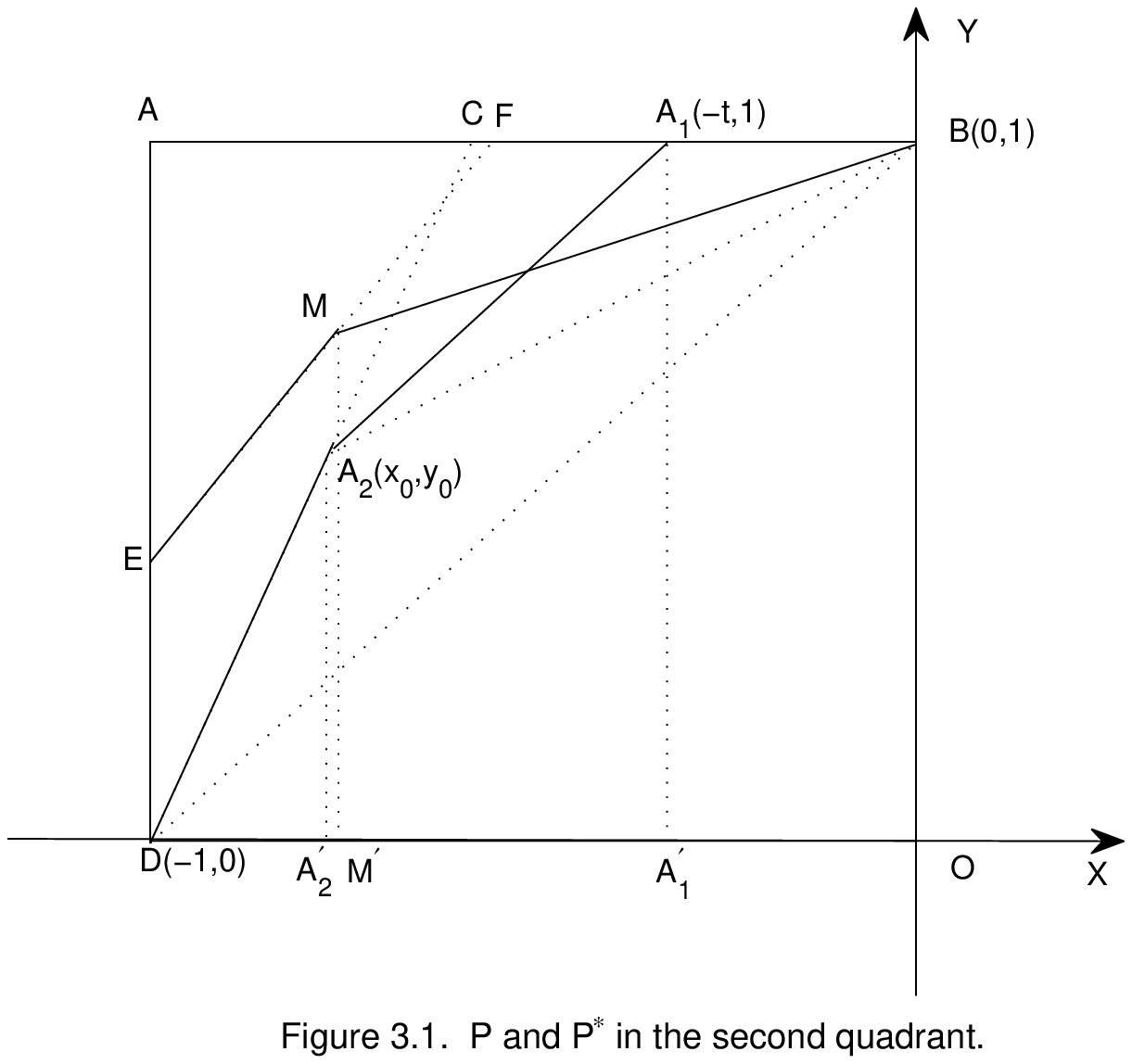 }
\end{figure}
\begin{lem} Let $P$ be a 1-unconditional polygon in the coordinate plane $XOY$ satisfying
$$P\cap \{(x,y): x\leq 0,\;y\geq 0\}={\rm conv}\{O,D,A_2,A_1,B\},$$
where $A_1$ lies on the line segment $AB$ and  $A_2\in {\rm int}
\triangle ABD$, $R$ the origin-symmetric body of revolution
generated by
 $P$. Then
\begin{eqnarray} \mathcal
{P}(R)\geq\min\{\mathcal {P}(R_1),\mathcal {P}(R_2)\}
\end{eqnarray}
and
\begin{eqnarray} \mathcal
{P}(R)\geq\frac{4\pi^2}{3},
\end{eqnarray}
 where  $R_1$ and $R_2$ are origin-symmetric bodies
of revolution generated by 1-unconditional polygons
 $P_1$ and $P_2$
 satisfying
$$P_1\cap \{(x,y): x\leq 0,\;y\geq 0\}={\rm conv}\{O,D,A_2,B\}$$
and
$$P_2\cap \{(x,y): x\leq 0,\;y\geq 0\}={\rm conv}\{O,D,C,B\},$$
respectively,  where $C$ is the point of intersection between two
lines $A_2D$ and $AB$.

\end{lem}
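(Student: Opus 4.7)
I would embed $P$ into a one-parameter family of 1-unconditional polygons $P(s)$ that deforms $P_1$ into $P_2$, and argue that $\mathcal{P}(R(s))$ attains its minimum on this family at an endpoint. Write $A_2=(-u_2,v_2)$; then $C=(-u_C,1)$ with $u_C=(u_2+v_2-1)/v_2$, and convexity of $P$ forces $A_1\in BC$, so put $A_1(s)=(-s,1)$ for $s\in[0,u_C]$ and let $P(s)$ be the resulting 5-vertex polygon. At $s=0$ one has $A_1=B$ and $P(0)=P_1$; at $s=u_C$ one has $A_1=C$, so $D$, $A_2$, $C$ are collinear, $A_2$ is absorbed into edge $DC$, and $P(u_C)=P_2$.

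Two applications of Lemma 2.1 give the volumes explicitly. The generating function of $R(s)$ has three linear pieces on $[-1,0]$, yielding
\[V(R(s))=\tfrac{2\pi}{3}(A_0+A_1 s),\qquad A_1=(1-v_2)(2+v_2)>0,\]
which is linear in $s$. For $V(R^*(s))$, Lemma 3.2 says $R^*(s)$ is the revolution body of $P^*(s)$, and Remark 2.1 together with a direct calculation shows the upper-left boundary of $P^*(s)$ passes through $(0,1)$, the moving vertex $w_2(s)=((v_2-1)/\mu,\,(u_2-s)/\mu)$ with $\mu=u_2-sv_2$, and the fixed $w_1=(-1,(1-u_2)/v_2)$. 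Lemma 2.1 then gives, in terms of $\tau=1/\mu$,
\[V(R^*(s))=\tfrac{2\pi}{3}(a\tau^2+b\tau+c),\]
where $a=q^3 u_2/v_2^2>0$, $b=-q^2(v_2+2)/v_2^2<0$, $c=((1-u_2)^2+(1-u_2)+1)/v_2^2>0$, with $q=1-v_2$.

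Writing $\mathcal{P}(R(s))=V(R(s))V(R^*(s))$ and differentiating with $d\tau/ds=v_2\tau^2$, a clean cancellation of the $\tau^1$ term reduces the critical-point condition $\mathcal{P}'(s)=0$ to
\[f(\tau)\equiv 2Ka\,\tau^3+(Kb-A_1 a)\,\tau^2+A_1 c=0,\qquad K=v_2^3+2u_2.\]
Since $f(0)=A_1 c>0$ and $f\to+\infty$, $f$ has a local maximum at $\tau=0$ and a local minimum at $\tau^{\ast}=(A_1 a-Kb)/(3Ka)>0$; denote by $\tau_1<\tau^{\ast}<\tau_2$ its two positive real roots when they exist. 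Using the geometric constraint $1-u_2<v_2<1$ (from $A_2\in{\rm int}\,\triangle ABD$), one verifies the two algebraic inequalities $\tau^{\ast}>1/u_2$ always, and $f(1/q)<0$ whenever $\tau^{\ast}<1/q$ (so $\tau_2>1/q$). In either case, $f$ has at most one zero on the window $\tau\in[1/u_2,1/q]$, and at any such zero $\mathcal{P}'$ switches from $+$ to $-$; hence $s\mapsto\mathcal{P}(R(s))$ is unimodal on $[0,u_C]$ and attains its minimum at an endpoint, giving (3.4). For (3.5), iterate (3.4) down to the 4-vertex families $R_1$ and $R_2$; each is a one-parameter interpolation between a cylinder and a bicone, and a direct Lemma 2.1 calculation shows $\mathcal{P}\geq 4\pi^2/3$ with equality precisely at these extremes.

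The main obstacle is the cubic analysis: the two algebraic inequalities $\tau^{\ast}>1/u_2$ and $f(1/q)\leq 0$ (whenever $\tau^{\ast}<1/q$) must be extracted from the explicit formulas for $a,b,c,K,A_1$ together with the constraint $1-u_2<v_2$, after substantial simplification. A softer argument via global concavity of $\mathcal{P}(R(s))$ is unavailable, since in extreme regimes (e.g.\ $v_2$ small and $1-u_2$ just below $v_2$) one finds $\mathcal{P}''(s)>0$ near $s=u_C$; unimodality really does have to be extracted from the sign behavior of $f$ on the geometrically-determined window.
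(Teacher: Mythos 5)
Your framework for (3.4) is sound and, modulo notation, is the same strategy as the paper's: move $A_1=(-s,1)$ along $AB$ between the two endpoint configurations, compute $V(R(s))$ and $V(R^*(s))$ explicitly from Lemma 2.1, and show the product has no interior minimum. I checked your formulas: $V(R(s))=\tfrac{2\pi}{3}(A_0+A_1s)$ with $A_1=(1-v_2)(2+v_2)$, the polar coefficients $a=q^3u_2/v_2^2$, $b=-q^2(v_2+2)/v_2^2$, $c=((1-u_2)^2+(1-u_2)+1)/v_2^2$, and the cancellation giving $\mathcal{P}'(s)\propto f(\tau)=2Ka\tau^3+(Kb-A_1a)\tau^2+A_1c$ with $K=v_2^3+2u_2$ are all correct; the $\tau=1/(u_2-sv_2)$ substitution is in fact a cleaner packaging than the paper's rational-function derivatives. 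However, the two inequalities you merely assert are exactly the technical heart of the lemma: $\tau^*>1/u_2$ is the statement $F''(0)<0$ (the paper's $I(0)<0$), and ``$f(1/q)<0$ whenever $\tau^*<1/q$'' is precisely the paper's dichotomy that on one region of $(x_0,y_0)$-space the derivative at the right endpoint is $\leq 0$ (the quadratic $G$ on $\mathcal{D}_1$) while on the complementary region the second derivative at the right endpoint is $\leq 0$ (the quadratic $H$ on $\mathcal{D}_2$). These claims are true (I could verify the first directly, and the second is equivalent to the paper's Second Step), but your proposal explicitly defers them as ``the main obstacle''; as written this is an unproven core, not a complete proof of (3.4).

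For (3.5) there is a genuine gap, not just an omission. Your plan is to ``iterate (3.4) down to the 4-vertex families $R_1$ and $R_2$; each is a one-parameter interpolation between a cylinder and a bicone.'' This is correct for $R_2$: its generating quadrant $\mathrm{conv}\{O,D,C,B\}$ has $C$ on $AB$, and the trapezoid family does give $\mathcal{P}=\tfrac{4\pi^2}{9}(1+2c)(c^2-3c+3)\geq\tfrac{4\pi^2}{3}$. But it is false for $R_1$: its generating quadrant $\mathrm{conv}\{O,D,A_2,B\}$ has $A_2$ strictly inside $\triangle ABD$ and no movable vertex on $AB$, so it belongs to no one-parameter family joining cylinder and bicone, and (3.4) cannot be applied to it again as it stands. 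The missing idea is the paper's polarity step: since $\mathcal{P}(R_1)=\mathcal{P}(R_1^*)$ and (by Lemma 3.2 and Remark 2) the generating polygon of $R_1^*$ has its nontrivial vertices on the segments $AD$ and $AB$, one can apply (3.4) to $R_1^*$, and after at most one further polarity reach $R_T$ or $R_Q$. Without this (or some substitute handling the two-parameter family $\mathrm{conv}\{O,D,A_2,B\}$ directly), your iteration stalls at $R_1$ and (3.5) does not follow.
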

\begin{proof}
In Figure 3.1, let $A_2=(x_0,y_0)$ and $A_1=(-t,1)$, then
$$C=(\frac{x_0-y_0+1}{y_0},1)\;\;\textrm{and}\;\;0\leq t\leq \frac{-x_0+y_0-1}{y_0}.$$

From Remark 2, we can get $P^{\ast}$, which satisfies
$$P^{\ast}\cap \{(x,y): x\leq 0,\;y\geq 0\}=\textrm{conv}\{M,E,D,O,B\},$$
 where $E$ lies on the
line segment $AD$ and $M\in \textrm{int} \triangle ABD$. Let $F$ be
the point of intersection
 between two
lines $EM$ and $AB$. Let $$F_1(t)=\frac{1}{2}V(R),\;\;
F_2(t)=\frac{1}{2}V(R^{\ast})\;\; \textrm{and}
\;\;F(t)=F_1(t)F_2(t).$$

{\it Firstly, we prove {\rm(}3.4{\rm)}.} The proof consists of three
steps for good understanding.

{\bf First step.} {\it We calculate the first and second derivatives
of the functions $F(t)$.}

Since $EF\bot OA_2$ and the distance of the line $EF$ from $O$ is
$1/\|OA_2\|$, we have the equation of the line $EF$
\begin{eqnarray}
y=-\frac{x_0}{y_0}x+\frac{1}{y_0}.
\end{eqnarray}
Similarly, since $BM\bot OA_1$ and the distance of the line $BM$
from $O$ is $1/\|OA_1\|$, we get the equation of the line $BM$
\begin{eqnarray}
y=tx+1.
\end{eqnarray}

Using equations (3.6) and (3.7), we obtain
\begin{eqnarray}
M=(x_M,y_M)=\left(\frac{1-y_0}{ty_0+x_0},
\frac{x_0+t}{ty_0+x_0}\right)
\end{eqnarray}
and
\begin{eqnarray}
E=(x_E,y_E)=\left(-1, \frac{x_0+1}{y_0}\right).
\end{eqnarray}

Noting that
\begin{eqnarray*}
& & P\cap \{(x,y): x\leq 0,\;y\geq 0\}
\nonumber\\
&=&\textrm{conv}\{D,A_2,A_2^{\prime}\}\cup\textrm{conv}\{A_1,A_2,A_2^{\prime},A_1^{\prime}\}\cup\textrm{conv}\{O,B,A_1,A_1^{\prime}\},
\end{eqnarray*}
where $A_1^{\prime}$ and $A_2^{\prime}$ are the orthogonal
projections of points $A_1$ and $A_2$, respectively, on the
$X$-axis, and applying Lemma 2.1, we have
\begin{eqnarray}
F_1(t)&=&\frac{\pi}{3}
y_0^2(x_0+1)+\frac{\pi}{3}(-t-x_0)(y_0^2+y_0+1)+\pi t
\nonumber\\
&=&\frac{\pi}{3}(-y_0^2-y_0+2) t+\frac{\pi}{3}(y_0^2-x_0y_0-x_0).
\end{eqnarray}
Thus, we have
\begin{eqnarray}
F_1^{\prime}(t)=\frac{\pi}{3}(-y_0^2-y_0+2).
\end{eqnarray}

Noting that
\begin{eqnarray*}
& & P^{\ast}\cap \{(x,y): x\leq 0,\;y\geq 0\}
\nonumber\\
&=&\textrm{conv}\{D,E, M,
M^{\prime}\}\cup\textrm{conv}\{M,M^{\prime},O,B\},
\end{eqnarray*}
where $M^{\prime}$ is the orthogonal projection of point $M$ on the
$X$-axis, and applying Lemma 2.1, we obtain

\begin{eqnarray}
F_2(t)&=&\frac{\pi}{3}(x_M-x_E)(y_E^2+y_Ey_M+y_M^2)+\frac{\pi}{3}(-x_M)(y_M^2+y_M+1)
\nonumber\\
&=&\frac{\pi}{3}(\frac{1-y_0}{ty_0+x_0}+1)\left[(\frac{x_0+1}{y_0})^2+(\frac{x_0+1}{y_0})(\frac{x_0+t}{ty_0+x_0})+(\frac{x_0+t}{ty_0+x_0})^2\right]
\nonumber\\
&~&+\frac{\pi}{3}(\frac{y_0-1}{ty_0+x_0})\left[(\frac{x_0+t}{ty_0+x_0})^2+(\frac{x_0+t}{ty_0+x_0})+1\right]
\nonumber\\
&=&\frac{\pi}{3} \frac{\Delta_1 t^3+\Delta_2 t^2+\Delta_3
t+\Delta_4}{y_0^2(ty_0+x_0)^3},
\end{eqnarray}
where
\begin{eqnarray*}
&~&\Delta_1=y_0^3(x_0^2+3x_0+3),
\nonumber\\
&~&\Delta_2=y_0^2(3x_0^3+9x_0^2+9x_0+y_0^3-3y_0+2),
\nonumber\\
&~&\Delta_3=3y_0[x_0^4+3x_0^3+3x_0^2+x_0(y_0^3-y_0^2-y_0+1)],
\nonumber\\
&~&\Delta_4=x_0^2(x_0^3+3x_0^2+3x_0+2y_0^3-3y_0^2+1).
\end{eqnarray*}
Thus, we have
\begin{eqnarray}
F_2^{\prime}(t)&=&\frac{\pi}{3}\frac{(3\Delta_1x_0-\Delta_2y_0)t^2+(2\Delta_2x_0-2\Delta_3y_0)t+(\Delta_3x_0-3\Delta_4y_0)}{y_0^2(ty_0+x_0)^4}
\nonumber\\
&=&\frac{\pi}{3}(y_0-1)^2\frac{-y_0(y_0+2)t^2-2x_0(2y_0+1)t-3x_0^2}{(ty_0+x_0)^4}.
\end{eqnarray}
Then, we have
\begin{eqnarray}
F^{\prime}(t)&=&F_1^{\prime}(t)F_2(t)+F_1(t)F_2^{\prime}(t)
\nonumber\\
&=&\frac{\pi^2}{9}\frac{\Lambda_1t^4+\Lambda_2t^3+\Lambda_3t^2+\Lambda_4t+\Lambda_5}{y_0^2(ty_0+x_0)^4},
\end{eqnarray}
where
\begin{eqnarray*}
&~&\Lambda_1=y_0^4[x_0^2(-y_0^2-y_0+2)+3x_0(-y_0^2-y_0+2)+3(-y_0^2-y_0+2)],
\nonumber\\
&~&\Lambda_2=y_0^3[4x_0^3(-y_0^2-y_0+2)+12x_0^2(-y_0^2-y_0+2)+12x_0(-y_0^2-y_0+2)],
\nonumber\\
&~&\Lambda_3=y_0^2[6x_0^4(-y_0^2-y_0+2)+18x_0^3(-y_0^2-y_0+2)+18x_0^2(-y_0^2-y_0+2)
\nonumber\\
&~&\;\;\;\;\;\;\;\;\;+x_0(y_0^5-2y_0^4+8y_0^2-13y_0+6)+(-y_0^6+3y_0^4-2y_0^3)],
\nonumber\\
&~&\Lambda_4=y_0[4x_0^5(-y_0^2-y_0+2)+12x_0^4(-y_0^2-y_0+2)+12x_0^3(-y_0^2-y_0+2)
\nonumber\\
&~&\;\;\;\;\;\;\;\;\;+x_0^2(2y_0^5-4y_0^4+4y_0^3+4y_0^2-14y_0+8)+x_0(-4y_0^6+6y_0^5-2y_0^3)],
\nonumber\\
&~&\Lambda_5=x_0^6(-y_0^2-y_0+2)+3x_0^5(-y_0^2-y_0+2)+3x_0^4(-y_0^2-y_0+2)
\nonumber\\
&~&\;\;\;\;\;\;\;\;\;+x_0^3(y_0^5-2y_0^4+4y_0^3-4y_0^2-y_0+2)+x_0^2(-3y_0^6+6y_0^5-3y_0^4).
\end{eqnarray*}
Simplifying the above equation, we get
\begin{eqnarray}
F^{\prime}(t)&=&\frac{\pi^2}{9}\frac{-y_0^2-y_0+2}{y_0^2(ty_0+x_0)^3}
\{(x_0^2+3x_0+3)y_0^3t^3+3x_0(x_0^2+3x_0+3)y_0^2t^2
\nonumber\\
&~&+[3x_0^4+9x_0^3+9x_0^2+x_0(-y_0^3+3y_0^2-5y_0+3)+y_0^3(y_0-1)]y_0t
\nonumber\\
&~&+[x_0^5+3x_0^4+3x_0^3+x_0^2\frac{-y_0^4+y_0^3-3y_0^2+y_0+2}{y_0+2}+x_0\frac{3y_0^5-3y_0^4}{y_0+2}]\}.
\nonumber\\
\end{eqnarray}
From (3.14), we can get
\begin{eqnarray}
F^{\prime\prime}(t)&=&\frac{\pi^2}{9}\frac{(4\Lambda_1x_0-\Lambda_2y_0)t^3+(3\Lambda_2x_0-2\Lambda_3y_0)t^2+(2\Lambda_3x_0-3\Lambda_4y_0)t+(\Lambda_4x_0-4\Lambda_5y_0)}{y_0^2(ty_0+x_0)^5}
\nonumber\\
&=&\frac{\pi^2}{9}\frac{\Gamma_1t^2+\Gamma_2t+\Gamma_3}{y_0^2(ty_0+x_0)^5},
\end{eqnarray}
where
\begin{eqnarray*}
&~&\Gamma_1=-2x_0y_0^3(y_0^5-2y_0^4+8y_0^2-13y_0+6)-2y_0^6(-y_0^3+3y_0-2),
\nonumber\\
&~&\Gamma_2=x_0^2y_0^2(-4y_0^5+8y_0^4-12y_0^3+4y_0^2+16y_0-12)+x_0y_0^5(10y_0^3-18y_0^2+6y_0+2),
\nonumber\\
&~&\Gamma_3=x_0^3y_0^2(-2y_0^4+4y_0^3-12y_0^2+20y_0-10)+x_0^2y_0^4(8y_0^3-18y_0^2+12y_0-2).
\end{eqnarray*}
Simplifying the above equation, we get
\begin{eqnarray}
F^{\prime\prime}(t)&=&\frac{\pi^2}{9}\frac{\frac{\Gamma_1}{y_0}t+(\frac{\Gamma_2}{y_0}-\frac{x_0\Gamma_1}{y_0^2})}{y_0^2(ty_0+x_0)^4}
\nonumber\\
&=&\frac{\pi^2}{9}\frac{(y_0-1)^2}{(ty_0+x_0)^4}\{[-2x_0(y_0+2)(y_0^2-2y_0+3)+2y_0^3(y_0+2)]t
\nonumber\\
&~&+[x_0^2(-2y_0^2-10)+x_0y_0^2(8y_0-2)]\}.
\end{eqnarray}

\begin{figure}[htb]
\centering
 \includegraphics[height=10cm]{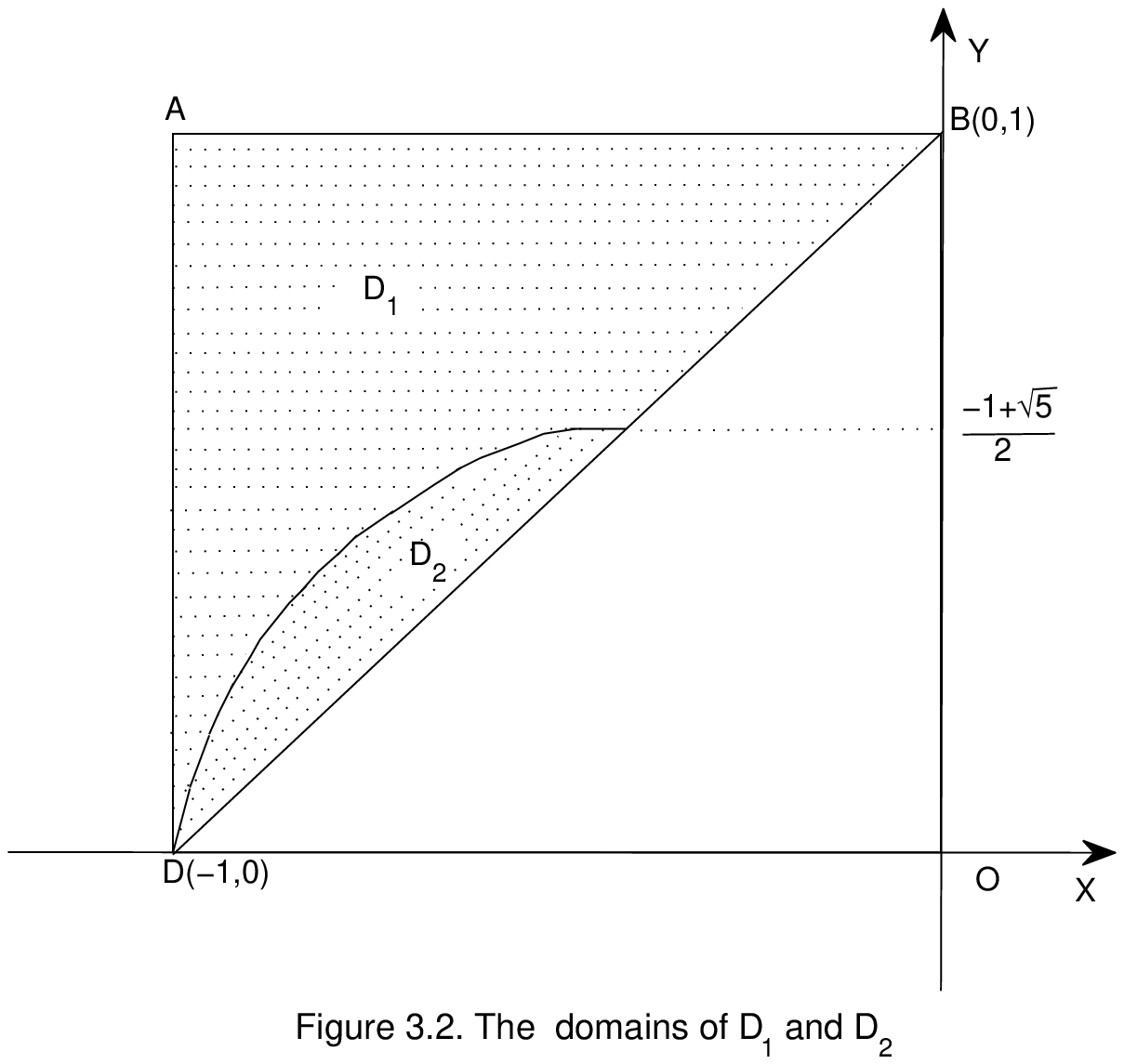 }
\end{figure}
{\bf Second step.} {\it We prove that

$${\rm(}i{\rm)}\;\; F^{\prime}(\frac{-x_0+y_0-1}{y_0})\leq 0
\;\;\textrm{for }(x_0,y_0)\in \mathcal {D}_1$$ and

$${\rm(}ii{\rm)}\;\; F^{\prime\prime}(\frac{-x_0+y_0-1}{y_0})\leq 0
\;\;\textrm{for}\;\; (x_0,y_0)\in \mathcal {D}_2,$$

where
\begin{eqnarray}
\mathcal {D}_1&=&\{(x,y): -1\leq x\leq
y-1,\;\frac{-1+\sqrt{5}}{2}\leq y\leq 1\}
\nonumber\\
&~&\cup\{(x,y): -1\leq x\leq
\frac{y^3+2y^2+3y-6}{(2-y)(y+3)},\;0\leq
y\leq\frac{-1+\sqrt{5}}{2}\} \nonumber\\
\end{eqnarray}
and
\begin{eqnarray}
\mathcal {D}_2&=&\{(x,y): \frac{y^3+2y^2+3y-6}{(2-y)(y+3)}\leq x\leq
y-1,\;0\leq
y\leq\frac{-1+\sqrt{5}}{2}\}.\nonumber\\
\end{eqnarray}}

In fact, from (3.15), we have that
\begin{eqnarray}
F^{\prime}(\frac{-x_0+y_0-1}{y_0}) &=&\frac{\pi^2}{9y_0^2}\;
G(x_0,y_0),
\end{eqnarray}
where
$$G(x_0,y_0)=x_0^2(2-y_0)(y_0+3)-x_0(y_0^3+3y_0^2+4y_0-12)-(y_0+2)(y_0^3+3y_0-3).$$
Noting that $G(x_0,y_0)$ is a quadratic function of the variable
$x_0$ defined on $[-1, y_0-1]$ and $0\leq y_0\leq 1$, the graph of
the quadratic function is a parabola opening upwards.

When $x_0=-1$, we obtain
$$G(-1,y_0)=-y_0^2(y_0^2+y_0+1)<0.$$
When $x_0=y_0-1$, we have
$$G(y_0-1,y_0)=-3y_0^2(y_0^2+y_0-1).$$
Then we have
$$G(y_0-1,y_0)\leq 0\;\; {\rm for} \;\frac{-1+\sqrt{5}}{2}\leq y_0\leq 1$$
and
$$G(y_0-1,y_0)\geq0\;\; {\rm for} \;0\leq y_0<\frac{-1+\sqrt{5}}{2}.$$
When $$x_0=\frac{y_0^3+2y_0^2+3y_0-6}{(2-y_0)(y_0+3)}\in
[-1,y_0-1],$$ we have
$$G(\frac{y_0^3+2y_0^2+3y_0-6}{(2-y_0)(y_0+3)},y_0)=G(-1,y_0)<0.$$
Hence,
$$G(x_0,y_0)\leq 0,\;\;\textrm{for}\;\;(x_0,y_0)\in\mathcal {D}_1.$$
From (3.20), we have
\begin{eqnarray}
F^{\prime}(\frac{-x_0+y_0-1}{y_0})\leq 0\;\;{\rm
for}\;\;(x_0,y_0)\in \mathcal {D}_1.
\end{eqnarray}

By (3.17), we get
\begin{eqnarray}
F^{\prime\prime}(\frac{-x_0+y_0-1}{y_0})
&=&\frac{\pi^2}{9}\frac{1}{y_0(1-y_0)} H(x_0,y_0),
\end{eqnarray}
where

\begin{eqnarray}
H(x_0,y_0)=12x_0^2-x_0(4y_0^3+2y_0-12)-2y_0^3(y_0+2).
\end{eqnarray}
Noting that $H(x_0,y_0)$ is a quadratic function of the variable
$x_0$ defined on $[-1,y_0-1]$ and the coefficient of the quadratic
term is positive, the graph of the quadratic function is a parabola
opening upwards.

Let $x_0=y_0-1$, we have
\begin{eqnarray}
H(y_0-1,y_0)=-6y_0^4-10y_0(1-y_0)\leq 0.
\end{eqnarray}

Let
$$x_0=\frac{y_0^3+2y_0^2+3y_0-6}{(2-y_0)(y_0+3)},$$ we have
\begin{eqnarray}
&&H(\frac{y_0^3+2y_0^2+3y_0-6}{(2-y_0)(y_0+3)},y_0)
\nonumber\\
&=&\frac{2y_0^8+4y_0^7+24y_0^6+50y_0^5-38y_0^4-18y_0^3-48y_0^2-72y_0}{(2-y_0)^2(y_0+3)^2}
\nonumber\\
&\leq& 0.
\end{eqnarray}
From (3.24) and (3.25), we have
$$H(x_0,y_0)\leq 0\;\;\textrm{for}\;\;(x_0,y_0)\in \mathcal {D}_2.$$
Therefore, from (3.22) and $0<y_0<1$, we have
\begin{eqnarray}
F^{\prime\prime}(\frac{-x_0+y_0-1}{y_0})\leq 0\;\;{\rm
for}\;\;(x_0,y_0)\in \mathcal {D}_2.
\end{eqnarray}
\\

{\bf Third step.} {\it We prove $\mathcal {P}(R)\geq\min\{\mathcal
{P}(R_1),\mathcal {P}(R_2)\}$.}

By (3.17), we have
\begin{eqnarray}
F^{\prime\prime}(t)&=&\frac{\pi^2}{9}\frac{(y_0-1)^2}{(ty_0+x_0)^4}
I(t),
\end{eqnarray}
where
\begin{eqnarray}
I(t)&=&[-2x_0(y_0+2)(y_0^2-2y_0+3)+2y_0^3(y_0+2)] t
 \nonumber\\
&~&+[x_0^2(-2y_0^2-10)+x_0y_0^2(8y_0-2)]
\end{eqnarray}
and
\begin{eqnarray}
0\leq t\leq \frac{-x_0+y_0-1}{y_0}.
\end{eqnarray}

Since
$$-2x_0(y_0+2)(y_0^2-2y_0+3)+2y_0^3(y_0+2)>0,$$  $I(t)$ is an increasing function of the variable $t$.

By (3.26), for any
$$(x_0,y_0)\in\mathcal {D}_2,$$
we have
$$F^{\prime\prime}(\frac{-x_0+y_0-1}{y_0})\leq 0.$$
From (3.27), we have
$$I(\frac{-x_0+y_0-1}{y_0})\leq 0,$$
which implies that $I(t)\leq 0$ for any
$$0\leq t\leq \frac{-x_0+y_0-1}{y_0}.$$
Therefore $F^{\prime\prime}(t)\leq 0$ for any

$$0\leq t\leq \frac{-x_0+y_0-1}{y_0}.$$

It follows that the function $F(t)$ is concave on the interval
$$[0,\frac{-x_0+y_0-1}{y_0}],$$ which implies
$$F(t)\geq \min\{F(0),F(\frac{-x_0+y_0-1}{y_0})\}.$$
Therefore, we have
$$\mathcal
{P}(R)\geq\min\{\mathcal {P}(R_1),\mathcal {P}(R_2)\}.$$

By (3.21), for any $(x_0,y_0)\in\mathcal {D}_1$, we have
$$F^{\prime}(\frac{-x_0+y_0-1}{y_0})\leq 0.$$
Now we prove that the inequality (3.4) holds in each of the
following situations:

$$\textrm{(i)}\;\;I(\frac{-x_0+y_0-1}{y_0})\leq 0;\;\;\;\;\;\;\;\;\;\;\;\;\;\;\;\;\;\;\;\;\;\;$$
$$\textrm{(ii)}\;\;I(\frac{-x_0+y_0-1}{y_0})> 0\;\;\textrm{and}\;\;I(0)< 0;$$
$$\textrm{(iii)}\;\;I(0)\geq 0.\;\;\;\;\;\;\;\;\;\;\;\;\;\;\;\;\;\;\;\;\;\;\;\;\;\;\;\;\;\;\;\;\;\;\;\;\;\;\;\;\;$$
We have proved (3.4) in the case (i), and now we prove (3.4) in
cases (ii) and (iii).

For the case (ii), since $I(t)$ is increasing and by (3.27), there
exists a real number
$$t_0\in (0,\frac{-x_0+y_0-1}{y_0})$$ satisfying
$$F^{\prime\prime}(t)\leq 0\;\;\textrm{for}\;\;t\in [0,t_0]$$
and
$$F^{\prime\prime}(t)> 0\;\;\textrm{for}\;\;t\in (t_0,\frac{-x_0+y_0-1}{y_0}].$$
It follows that $F^{\prime}(t)$ is decreasing on the interval
$[0,t_0]$ and increasing on the interval
$$(t_0,\frac{-x_0+y_0-1}{y_0}].$$
If $F^{\prime}(0)\leq 0$, and since
$$F^{\prime}(\frac{-x_0+y_0-1}{y_0})\leq 0,$$ we have
$$F^{\prime}(t)\leq 0\;\;\textrm{for}\;\;\textrm{any}\;\;t\in [0,\frac{-x_0+y_0-1}{y_0}],$$
which implies that the function $F(t)$ is  decreasing and
$$F(t)\geq F(\frac{-x_0+y_0-1}{y_0})\;\;\textrm{for}\;\;\textrm{any}\;\;t\in[0,\frac{-x_0+y_0-1}{y_0}].$$
Therefore we have $$\mathcal {P}(R)\geq\min\{\mathcal
{P}(R_1),\mathcal {P}(R_2)\}=\mathcal {P}(R_2).$$

If $F^{\prime}(0)>0$,  there exists a real number
$$t_1\in (0,\frac{-x_0+y_0-1}{y_0})$$ satisfying
$$F^{\prime}(t)>0\;\;\textrm{for}\;\;\textrm{any}\;\;t\in [0,t_1)$$
and
$$F^{\prime}(t)\leq 0\;\;\textrm{for}\;\;\textrm{any}\;\;t\in [t_1,\frac{-x_0+y_0-1}{y_0}],$$
which implies that the function $F(t)$ is increasing on the interval
$[0,t_1)$ and decreasing on the interval
$$[t_1,\frac{-x_0+y_0-1}{y_0}].$$  It follows that
$$F(t)\geq \min\{F(0),F(\frac{-x_0+y_0-1}{y_0})\}\;\;\textrm{for}\;\;\textrm{any}\;\;t\in [0,\frac{-x_0+y_0-1}{y_0}].$$
We then have
$$\mathcal {P}(R)\geq\min\{\mathcal {P}(R_1),\mathcal
{P}(R_2)\}.$$

For the case (iii), since the function $I(t)$ is increasing, we have
$$I(t)\geq 0\;\;\textrm{for}\;\;\textrm{any}\;\;t\in
[0,\frac{-x_0+y_0-1}{y_0}].$$ Hence, from (3.27), we have
$$F^{\prime\prime}(t)\geq 0\;\;\textrm{for}\;\;\textrm{any}\;\;t\in
[0,\frac{-x_0+y_0-1}{y_0}].$$ Therefore, the function
$F^{\prime}(t)$ is increasing on the interval
$$[0,\frac{-x_0+y_0-1}{y_0}],$$and since
$$F^{\prime}(\frac{-x_0+y_0-1}{y_0})\leq 0,$$ we have
$$F^{\prime}(t)\leq 0\;\;\textrm{for}\;\;\textrm{any}\;\;t\in
[0,\frac{-x_0+y_0-1}{y_0}],$$which implies that the function $F(t)$
is decreasing on the interval
$$[0,\frac{-x_0+y_0-1}{y_0}].$$
Therefore, we have
$$F(t)\geq F(\frac{-x_0+y_0-1}{y_0})\;\;\textrm{for}\;\;\textrm{any}\;\;t\in
[0,\frac{-x_0+y_0-1}{y_0}],$$ which implies that
$$\mathcal {P}(R)\geq\min\{\mathcal {P}(R_1),\mathcal
{P}(R_2)\}=\mathcal {P}(R_2).$$
\\

\textit{Secondly, we prove (3.5).}
\begin{figure}[htb]
\centering
 \includegraphics[height=10cm]{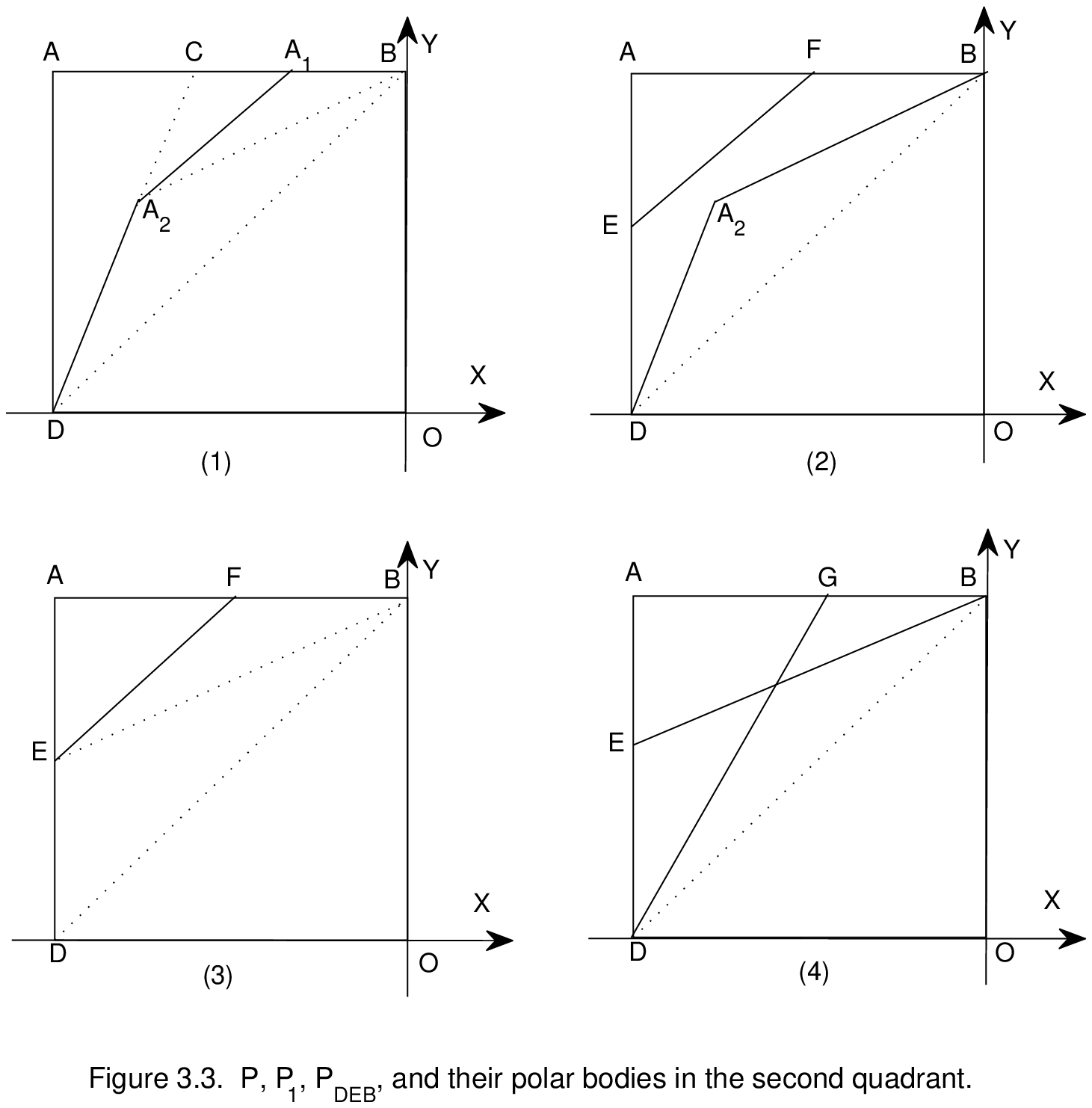 }
\end{figure}

In (3.4), if
$$\min\{\mathcal {P}(R_1),\mathcal {P}(R_2)\}=\mathcal {P}(R_2).$$
 Let
$$T=\{(x,y):|x|+|y|\leq 1\}$$ and
$$Q=\{(x,y):\max\{|x|,|y|\}\leq 1\}.$$
Let $R_T$ and $R_Q$ be the origin-symmetric bodies of revolution
generated by $T$ and $Q$, respectively. In (3.4), replacing $R$,
$R_1$, and $R_2$, by $R_2$, $R_T$, and $R_Q$, respectively (see (1)
of Figure 3.3), we obtain
\begin{eqnarray}
\mathcal {P}(R_2)\geq \min\{\mathcal {P}(R_T),\mathcal
{P}(R_Q)\}=\frac{4\pi^2}{3}.
\end{eqnarray}
It follows that
$$\mathcal {P}(R)\geq \mathcal {P}(R_2)\geq \frac{4\pi^2}{3}.$$

In (3.4), if
$$\min\{\mathcal {P}(R_1),\mathcal {P}(R_2)\}=\mathcal {P}(R_1),$$
let $E$, $F$ be the vertices of $P_1^{\ast}$ in the second quadrant,
where $E$, $F$ lie on line segments $AD$ and $AB$, respectively (see
(2) of Figure 3.3). Let $P_{DEB}$ be a 1-unconditional polygon
satisfying
$$P_{DEB}\cap\{(x,y): x\leq0,y\geq0\}=\textrm{conv}\{E,D,O,B\},$$
and let $R_{DEB}$ be an origin-symmetric body of revolution
generated by $P_{DEB}$. In (3.4), replacing $R$, $R_1$, and $R_2$,
by ${R_1}^{\ast}$, $R_{DEB}$, and $R_Q$, respectively  (see (3) of
Figure 3.3), we have
\begin{eqnarray}
\mathcal {P}(R)\geq \mathcal {P}(R_1)=\mathcal {P}({R_1}^{\ast})\geq
\min\{\mathcal {P}(R_{DEB}),\mathcal {P}(R_Q)\}.
\end{eqnarray}
In (3.31), if
$$\min\{\mathcal {P}(R_{DEB}),\mathcal
{P}(R_Q)\}=\mathcal {P}(R_Q),$$ we have proved (3.5); if
$$\min\{\mathcal {P}(R_{DEB}),\mathcal
{P}(R_Q)\}=\mathcal {P}(R_{DEB}),$$ let
$${P_{DEB}}^{\ast}\cap\{(x,y): x\leq0,\;y\geq0\}=\textrm{conv}\{G,D,O,B\},$$
where $G$ lies on the line segment $AB$, which is a vertex of
${P_{DEB}}^{\ast}$ (see (4) of Figure 3.3). In (3.4), replacing $R$,
$R_1$, and $R_2$, by ${R_{DEB}}^{\ast}$, $R_T$, and $R_Q$,
respectively, we obtain
\begin{eqnarray}
\mathcal {P}({R_{DEB}}^{\ast})\geq \min\{\mathcal {P}(R_T),\mathcal
{P}(R_Q)\}=\frac{4\pi^2}{3}.
\end{eqnarray}

Hence, we have  $$ \mathcal {P}(R)\geq \mathcal {P}(R_1)\geq\mathcal
{P}(R_{DEB})\geq\frac{4\pi^2}{3}.$$
\end{proof}
\begin{figure}[htb]
\centering
 \includegraphics[height=10cm]{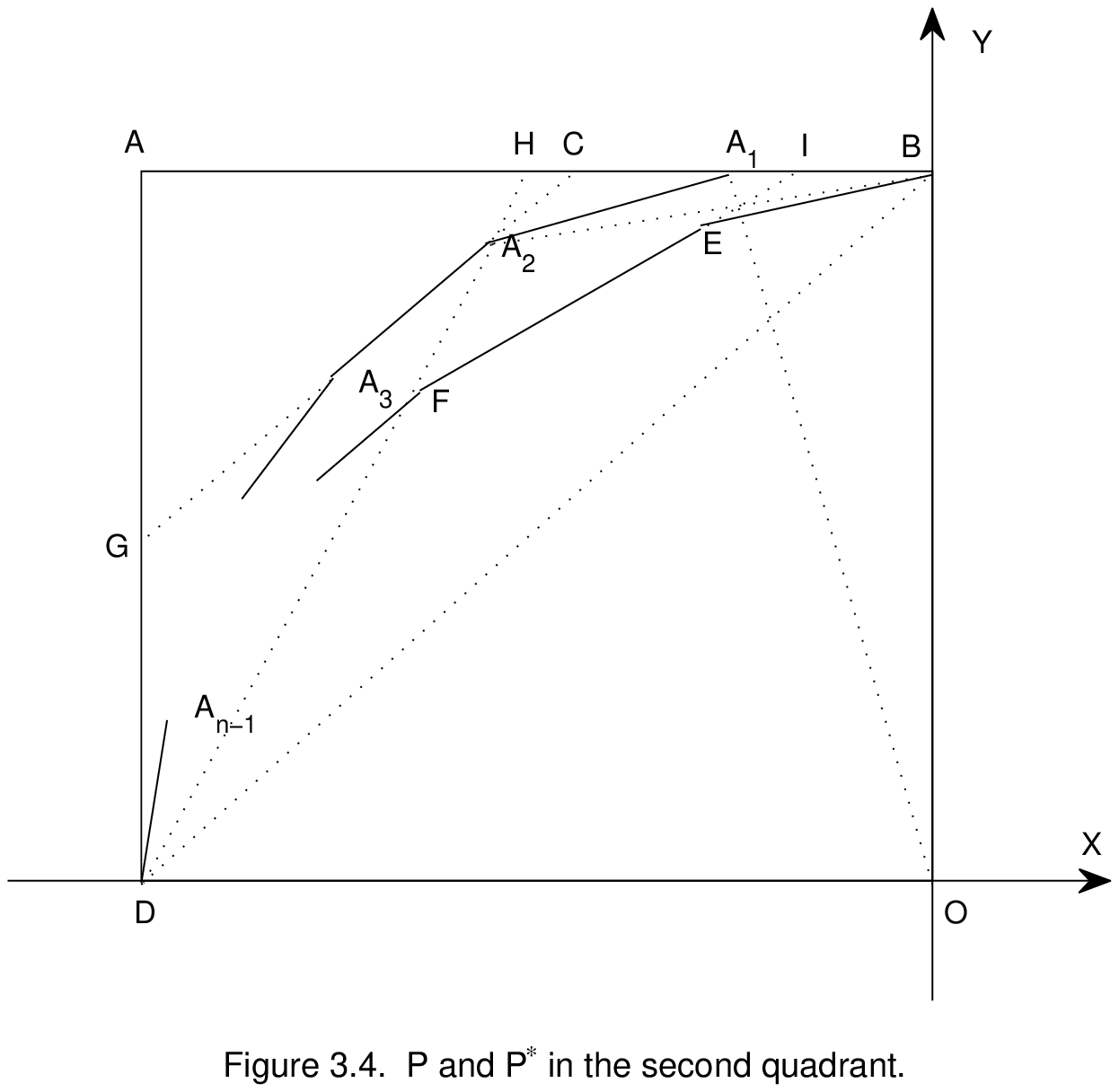 }
\end{figure}
\begin{lem} Let $P$ be a 1-unconditional polygon in the coordinate plane $XOY$ satisfying
$$P\cap\{(x,y): x\leq0,\;y\geq0\}={\rm conv}\{A_1, A_2, \cdots, A_{n-1}, D, O, B\},$$
where $A_1$ lies on the line segment $AB$, $A_2,\cdots, A_{n-1}\in
{\rm int} \triangle ABD$, and the slopes of lines $OA_i$ {\rm
(}$i=1,\cdots,n-1${\rm)} are increasing on $i$, $R$ the
origin-symmetric body of revolution generated by $P$. Then
\begin{eqnarray} \mathcal
{P}(R)\geq\min\{\mathcal {P}(R_1),\mathcal {P}(R_2)\},
\end{eqnarray}
where $R_1$ and $R_2$ are origin-symmetric bodies of revolution
generated by 1-unconditional polygons $P_1$ and $P_2$ satisfying
$$P_1\cap\{(x,y): x\leq0,\;y\geq0\}={\rm conv}\{A_2, A_3,\cdots, A_{n-1}, D, O, B\}$$
and
$$P_2\cap\{(x,y): x\leq0,\;y\geq0\}={\rm conv}\{C, A_3,\cdots, A_{n-1}, D, O, B\},$$
respectively, where $C$ is the point of intersection between two
lines $A_2A_3$ and $AB$.
\end{lem}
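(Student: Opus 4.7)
The plan is to parallel the three-step argument used in Lemma 3.4, varying only the single vertex $A_1$ along the segment $AB$ while keeping $A_2,\dots,A_{n-1},D,O,B$ fixed, and showing that the Mahler volume of $R$, viewed as a function of this one-parameter deformation, is minimized at one of the two endpoints of the admissible interval. These two endpoints are precisely the configurations producing $P_1$ and $P_2$.

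Concretely, write $A_1 = A_1(t) = (-t,1)$ and let $A_2 = (x_0,y_0)$. Then $t$ ranges over $[0,t_1]$, where $t = 0$ makes $A_1$ coincide with $B$ (so $A_1$ is absorbed and $P$ collapses to $P_1$), and $t = t_1$ is the unique value placing $A_1$ at the intersection $C$ of the line $A_2A_3$ with the segment $AB$ (so $A_2$ becomes collinear with $A_1$ and $A_3$, hence redundant, and $P$ collapses to $P_2$). The hypothesis that the slopes of $OA_i$ are increasing in $i$ ensures $t_1 \in (0,1]$, so the parameter interval is non-degenerate.

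Set $F_1(t) = \frac{1}{2}V(R)$, $F_2(t) = \frac{1}{2}V(R^{\ast})$, and $F(t) = F_1(t)F_2(t)$. The critical structural observation is that moving $A_1$ affects only two edges of $P$ — namely $BA_1$ and $A_1A_2$ — and hence, by Remark 2, only two vertices of the polar polygon $P^{\ast}$. Consequently $F_1(t)$ is affine in $t$ with exactly the same leading coefficient $\frac{\pi}{3}(-y_0^2-y_0+2)$ as in (3.10), while $F_2(t) = F_2^{\mathrm{var}}(t) + C_{\mathrm{fix}}$, where $F_2^{\mathrm{var}}(t)$ has the rational form of (3.12), with the point $D = (-1,0)$ of Lemma 3.4 replaced by the vertex $A_3$ in the present setting, and $C_{\mathrm{fix}}$ is the constant contribution to $V(R^{\ast})$ of the strips coming from the polar images of the fixed edges $A_3A_4,\dots,A_{n-1}D,DO$. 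A direct computation then shows that the numerator of $F''(t)$ is linear in $t$ with the same positive sign of its leading coefficient as the function $I(t)$ of (3.28), so $F''(t)$ changes sign at most once on $[0,t_1]$. The three-case analysis of the third step of Lemma 3.4 — which uses only the signs of $F''$ at the two endpoints and the fact that $F'' $ is a linear expression divided by a positive denominator — then applies verbatim, yielding $F(t) \geq \min\{F(0),F(t_1)\}$, which is the desired inequality.

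The main obstacle is algebraic bookkeeping. Since $F_1$ is affine the term $F_1''F_2$ in $F''$ vanishes, but the constant $C_{\mathrm{fix}}$ alters the coefficients in $F'' = 2F_1'F_2' + F_1F_2''$ relative to (3.17), and the coordinates of $A_3$ play the role that $D = (-1,0)$ played in Lemma 3.4. One must verify that, despite these changes, the linear-in-$t$ numerator of $F''(t)$ retains the same sign of its leading coefficient, so that the endpoint analysis splits cleanly into the three subcases (i)--(iii) of Lemma 3.4. I expect this to follow from the geometric content of the coefficients — positivity corresponds to the fact that moving $A_1$ outward simultaneously enlarges $R$ and shrinks $R^{\ast}$ in a controlled way — together with the constraint that $A_2,A_3$ lie in $\mathrm{int}\,\triangle ABD$ with $A_3$ below $A_2$ in the slope order; but verifying this explicitly is where the bulk of the work lies.
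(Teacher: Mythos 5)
Your overall strategy coincides with the paper's (one-parameter motion of $A_1=(-t,1)$, $F_1$ affine, $F_2$ rational up to the fixed contribution, reduction to an endpoint comparison through the sign pattern of $F''$), but there is a genuine gap at the point where you claim the three-case analysis of Lemma 3.4 ``applies verbatim'' once the numerator of $F''$ is linear in $t$ with positive leading coefficient. That case analysis does not use only the sign changes of $F''$: in cases (ii) and (iii) it needs $F'\leq 0$ at the right endpoint, and the splitting of the admissible $(x_0,y_0)$ into $\mathcal{D}_1$ and $\mathcal{D}_2$ needs $F''\leq 0$ at the right endpoint on $\mathcal{D}_2$. In Lemma 3.4 those endpoint inequalities were the entire content of the second step (the sign analysis of $G(x_0,y_0)$ and $H(x_0,y_0)$), and here they cannot simply be quoted: both $F'$ and $F''$ at the endpoint now depend on the unknown quantities $V=\tfrac12 V(R_1)$ and $V^{0}=\tfrac12 V(R_1^{\ast})$ contributed by the fixed vertices $A_3,\dots,A_{n-1},D$ (your $C_{\mathrm{fix}}$ is not a harmless additive constant, since it enters $F''=2F_1'F_2'+F_1F_2''$ through the factors $F_1$ and $F_2$), and the right endpoint is now $t_0=\frac{-x_0k+y_0-1}{k}$ with $k$ the slope of $A_2A_3$, not the value $\frac{-x_0+y_0-1}{y_0}$ for which Lemma 3.4 established anything.

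The paper spends most of its proof closing exactly this gap: it shows $F'(t_0)=\frac{\pi}{3}(\Upsilon_1V^{0}+\Upsilon_2V+\Upsilon_3)$ with $\Upsilon_1\geq0$, $\Upsilon_2\leq0$, so $F'(t_0)$ is dominated by the Lemma 3.4 case $R_0$ (where $V$ is smallest and $V^{0}$ largest), and then argues that if $F''(t_0)>0$ the monotonicity of the linear function $J$ forces $F'(t_0)\leq F'\bigl(\frac{-x_0+y_0-1}{y_0}\bigr)\leq0$ on $\mathcal{D}_1$; and on $\mathcal{D}_2$ it exploits that $J$ is linear in $V$ with $V(R_0)<V<V(R_M)$, so it suffices to check the two extremes, the $R_M$ extreme requiring a genuinely new computation (the cubic $L_1(k)$, shown increasing on $[\frac{1-y_0}{-x_0},\frac{y_0}{x_0+1}]$ and nonpositive at the right end via Lemma 3.4). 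None of this appears in your proposal, and your expectation that positivity of the leading coefficient of the numerator of $F''$ suffices is not correct. A smaller inaccuracy: the varying part of $F_2$ is not ``(3.12) with $D$ replaced by $A_3$''; the moving facet of $P^{\ast}$ is cut off by the fixed line dual to $A_2$, so the varying part depends only on $(x_0,y_0)$ and $t$, while $A_3$ enters only through $V$, $V^{0}$ and the endpoint $t_0$.
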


\begin{proof}In Figure 3.4, let $A_1=(-t,1)$ and $A_2=(x_0,y_0)$. Let the slope of the line
$A_3A_2$ be $k$, then
\begin{eqnarray}
\frac{1-y_0}{-x_0}<k<\frac{y_0}{x_0+1}
\end{eqnarray}
and the equation of the line $A_3A_2$ is
\begin{eqnarray}
y-y_0=k(x-x_0).
\end{eqnarray}
In (3.35), let $y=1$, we get the abscissa of $C$
$$x_C=x_0+\frac{1-y_0}{k}.$$

Let $E,\;F$ and $B$ be the vertices of $P^{\ast}$ satisfying $BE\bot
OA_1$ and $EF\bot OA_2$. Let $I$ be the point of intersection
between two lines $EF$ and $AB$. We have
$$BE:~y=tx+1$$
and
$$EF:~y=-\frac{x_0}{y_0}x+\frac{1}{y_0}.$$

Then, we get
$$I=(\frac{1-y_0}{x_0},1)$$
and
\begin{eqnarray}
E=(\frac{1-y_0}{t y_0+x_0},\frac{t+x_0}{t y_0+x_0}).
\end{eqnarray}
Let
\begin{eqnarray}
F(t)=\frac{1}{2}V(R)\frac{1}{2}V(R^{\ast})=\frac{1}{4}\mathcal
{P}(R),
\end{eqnarray}
which is a function of the variable $t$, where
$$0\leq t\leq -x_C=\frac{-x_0k+y_0-1}{k}.$$

Our proof has three steps.
\\

{\bf First step.} \textit{Calculate $F^{\prime}(t)$ and
$F^{\prime\prime}(t)$.}

 Let
$V=\frac{1}{2}V(R_1)$ and $V^{0}=\frac{1}{2}V({R_1}^{\ast})$, then
we obtain
\begin{eqnarray}
F(t)&=&\left(V+\frac{\pi}{3}(2-y_0-y_0^2)t\right)
\nonumber\\
&~&\times\left(V^{0}-\frac{\pi}{3}
\frac{y_0-1}{x_0}\left(2-\frac{t+x_0}{t
y_0+x_0}-\left(\frac{t+x_0}{t y_0+x_0}\right)^2\right)\right). \nonumber\\
\end{eqnarray}
Therefore, we have
\begin{eqnarray}
F^{\prime}(t)&=&\frac{\pi}{3}\frac{(2-y_0-y_0^2)(\Phi_1t^3+\Phi_2t^2+\Phi_3t+\Phi_4)}{(y_0t+x_0)^3},
\end{eqnarray}
where
\begin{eqnarray}
&~&\Phi_1=y_0[-\frac{\pi}{3}\frac{(1-y_0)^2(2y_0+1)}{x_0}+V^0y_0^2],
\nonumber\\
&~&\Phi_2=-\pi(1-y_0)^2(2y_0+1)+3V^0x_0y_0^2,
\nonumber\\
&~&\Phi_3=-2\pi(1-y_0)^2x_0+3V^0x_0^2y_0+(y_0-1)V,
\nonumber\\
&~&\Phi_4=V^0x_0^3-\frac{3x_0(1-y_0)V}{y_0+2}.
\end{eqnarray}
Thus, we have
\begin{eqnarray}
F^{\prime\prime}(t)&=&\frac{2\pi}{3}\frac{(1-y_0)^2}{(t y_0+x_0)^4}
J(t),
\end{eqnarray}
where
\begin{eqnarray}
J(t)&=&(y_0+2)[Vy_0+\pi x_0(y_0-1)] t
\nonumber\\
&&+x_0[V(4y_0-1)+\pi x_0(y_0^2+y_0-2)].
\end{eqnarray}
\\

{\bf Second step.} {\it We prove that
$${\rm(}i{\rm)}\;\; F^{\prime}(\frac{-x_0k+y_0-1}{k})\leq 0
\;\;\textrm{or}\;\;F^{\prime\prime}(\frac{-x_0k+y_0-1}{k})\leq
0\;\;\textrm{for }(x_0,y_0)\in \mathcal {D}_1$$ and
$${\rm(}ii{\rm)}\;\; F^{\prime\prime}(\frac{-x_0k+y_0-1}{k})\leq 0
\;\;\textrm{for}\;\; (x_0,y_0)\in \mathcal {D}_2,$$

where $\mathcal {D}_1$ and $\mathcal {D}_2$ have been given in
{\rm(}3.18{\rm)} and {\rm(}3.19{\rm)}.}
\\

By (3.39) and (3.40), let $$t_0=\frac{-x_0k+y_0-1}{k},$$ we have
\begin{eqnarray}
F^{\prime}(t_0)=\frac{\pi}{3}(\Upsilon_1V^0+\Upsilon_2V+\Upsilon_3),
\end{eqnarray}
where
\begin{eqnarray}
&~&\Upsilon_1=(1-y_0)(y_0+2),
\nonumber\\
&~&\Upsilon_2=\frac{k^2(-x_0k+y_0+2)}{(x_0k-y_0)^3},
\nonumber\\
&~&\Upsilon_3=-\frac{\pi}{3}\frac{y_0+2}{x_0(x_0k-y_0)^3}[k^3x_0^3(y_0-1)(-2y_0+3)+3k^2x_0^2y_0(y_0-1)(2y_0-3)
\nonumber\\
&~&\;\;\;\;\;\;\;\;\;\;+3kx_0(1-y_0)^3(2y_0+1)+y_0(2y_0+1)(y_0-1)^3].
\end{eqnarray}
Since $k>0$, $x_0<0$ and $0<y_0<1$, we have that $\Upsilon_1\geq 0$
and $\Upsilon_2\leq 0$, thus, as $V$ increases and  $V^0$ decreases,
$F^{\prime}(t_0)$ decreases.

Let $P_0$ be a 1-unconditional polygon satisfying
$$P_0\cap\{(x,y): x\leq0,\;y\geq0\}=\textrm{conv}\{A_2, D, O, B\}$$
and $R_0$ be an origin-symmetric body of revolution generated by
$P_0$. Let $V_0=\frac{1}{2}V(R_0)$ and
${V_0}^{\ast}=\frac{1}{2}V(R_0^{\ast})$. In (3.38), let $V=V_0$ and
$V^{0}={V_0}^{\ast}$, we get a function $F_0(t)$, which is the same
function as $F(t)$ in Lemma 3.4.

Since $V\geq V_0$ and $V^0\leq V_0^{\ast}$, we have

\begin{eqnarray}
F^{\prime}(\frac{-x_0k+y_0-1}{k})\leq
F_0^{\prime}(\frac{-x_0k+y_0-1}{k}).
\end{eqnarray}

Since
$$\frac{1-y_0}{-x_0}\leq k\leq \frac{y_0}{x_0+1},$$
we have
$$0\leq \frac{-x_0k+y_0-1}{k}\leq \frac{-x_0+y_0-1}{y_0}.$$
In (3.45), let
$$k=\frac{y_0}{x_0+1},$$
we have
\begin{eqnarray}
F^{\prime}(\frac{-x_0+y_0-1}{y_0})\leq
F_0^{\prime}(\frac{-x_0+y_0-1}{y_0}).
\end{eqnarray} From Lemma 3.4, we
have
\begin{eqnarray}
F_0^{\prime}(\frac{-x_0+y_0-1}{y_0})\leq
0\;\;\textrm{for}\;\;\textrm{any}\;\;(x_0,y_0)\in\mathcal {D}_1,
\end{eqnarray} hence
\begin{eqnarray}
F^{\prime}(\frac{-x_0+y_0-1}{y_0})\leq
0\;\;\textrm{for}\;\;\textrm{any}\;\;(x_0,y_0)\in\mathcal {D}_1.
\end{eqnarray}
If $F^{\prime\prime}(t_0)> 0$, by (3.41), $J(t_0)>0$, since $x_0<0$
and $0\leq y_0\leq 1$, $J(t)$ is an increasing linear function, thus
$J(t)>0$ for $t\geq t_0$, which implies $F^{\prime\prime}(t)> 0$ for
$t\geq t_0$. Thus $F^{\prime}(t)$ is increasing for $t\geq t_0$.
Since $$F^{\prime}(\frac{-x_0+y_0-1}{y_0})\leq 0,$$ we have
$F^{\prime}(t_0)\leq 0$. Therefore we have proved (i).

Next we prove (ii).

Let $G$ be the point of intersection between two lines $AD$ and
$A_2A_3$, then $G=(-1, y_0-k(x_0+1))$.
 Let $P_M$ be a 1-unconditional polygon satisfying
 $$P_M\cap\{(x,y): x\leq0,\;y\geq0\}=\textrm{conv}\{A_2, G, D, O, B\}$$
 and $R_M$ an origin-symmetric body of revolution generated by $P_M$. From Lemma 2.1, we
 have that
\begin{eqnarray}
\frac{1}{2}V(R_M)&=&\frac{\pi}{3}(x_0+1)[(y_0-k(x_0+1))^2+(y_0-k(x_0+1))y_0+y_0^2]
\nonumber\\
&~& +\frac{\pi}{3}(-x_0)(y_0^2+y_0+1).
\end{eqnarray}

 In (3.42), let
$$V=\frac{1}{2}V(R_M)$$
and
$$t=\frac{-x_0k+y_0-1}{k},$$
we get a function of the variable $k$
\begin{eqnarray}
L(k)&=&\frac{\Theta_1k^3+\Theta_2k^2+\Theta_3k+\Theta_4}{k},
\end{eqnarray}
where
\begin{eqnarray}
&~&\Theta_1=-\frac{\pi}{3}x_0(x_0+1)^3(y_0-1)^2,
\nonumber\\
&~&\Theta_2=\frac{\pi}{3}(x_0+1)^2y_0(y_0-1)(4x_0y_0-x_0+y_0+2),
\nonumber\\
&~&\Theta_3=\frac{\pi}{3}(y_0-1)(-5x_0^2y_0^3-9x_0y_0^3-3x_0^2y_0^2-9x_0y_0^2-x_0^2-3y_0^3-6y_0^2),
\nonumber\\
&~&\Theta_4=\frac{\pi}{3}(y_0-1)(y_0+2)(2x_0y_0^3+3y_0^3-x_0y_0^2+2x_0y_0-3x_0).
\end{eqnarray}
Let
\begin{eqnarray}
L_1(k)&=&\Theta_1k^3+\Theta_2k^2+\Theta_3k+\Theta_4.
\end{eqnarray}

Since $k>0$, to prove $L(k)\leq 0$, it suffices to prove $L_1(k)\leq
0$. In the following, we prove $L_1(k)\leq 0$ for
$$\frac{1-y_0}{-x_0}\leq k\leq \frac{y_0}{x_0+1}.$$
By (3.52), we have
\begin{eqnarray}
L_1^{\prime\prime}(k)&=&6\Theta_1k+2\Theta_2.
\end{eqnarray}
Since
$$L_1^{\prime\prime}(\frac{y_0}{x_0+1})=\frac{2\pi}{3}(x_0+1)^3y_0(y_0-1)(y_0+2)\leq 0$$
and
$$\Theta_1=-\frac{\pi}{3}x_0(x_0+1)^3(y_0-1)^2>0,$$
then $$L_1^{\prime\prime}(k)\leq
0\;\;\textrm{for}\;\;\textrm{any}\;\;\frac{1-y_0}{-x_0}\leq k\leq
\frac{y_0}{x_0+1}.$$

Hence, the function $L_1^{\prime}(k)$ is decreasing on the interval
$$[\frac{1-y_0}{-x_0}, \frac{y_0}{x_0+1}].$$

By (3.52), we have
\begin{eqnarray}
L_1^{\prime}(k)&=&3\Theta_1k^2+2\Theta_2k+\Theta_3.
\end{eqnarray}
From (3.54), we have that
\begin{eqnarray}
L_1^{\prime}(\frac{y_0}{x_0+1})&=&\frac{\pi}{3}(1-y_0)[x_0^2(2y_0^2+1)+x_0(2y_0^3+4y_0^2)+y_0^3+2y_0^2]
\nonumber\\
&=&\frac{\pi}{3}(1-y_0)\left[(2y_0^2+1)\left(x_0+\frac{y_0^3+2y_0^2}{2y_0^2+1}\right)^2+\frac{y_0^2(y_0+2)(1-y_0^3)}{2y_0^2+1}\right]
\nonumber\\
&\geq& 0.
\end{eqnarray}
Therefore $$L_1^{\prime}(k)\geq 0\;\; \textrm{for}\;\;
\textrm{any}\;\;\frac{1-y_0}{-x_0}\leq k\leq \frac{y_0}{x_0+1}.$$ It
follows that the function $L_1(k)$ is increasing on the interval
$$[\frac{1-y_0}{-x_0}, \frac{y_0}{x_0+1}].$$

When $$k=\frac{y_0}{x_0+1},$$ we have $R_M=R_0$ and
$$\frac{-x_0k+y_0-1}{k}=\frac{-x_0+y_0-1}{y_0}.$$

In Lemma 3.4, for $R=R_0$, we had proved
$$F^{\prime\prime}(\frac{-x_0+y_0-1}{y_0})\leq 0\;\;\textrm{for}\;\;(x_0,y_0)\in\mathcal {D}_2.$$
Hence,
$$L_1(\frac{y_0}{x_0+1})\leq 0\;\;\textrm{for}\;\;(x_0,y_0)\in\mathcal
{D}_2,$$ which implies that $L_1(k)\leq 0$ for any
 $$\frac{1-y_0}{-x_0}\leq k\leq \frac{y_0}{x_0+1}\;\;\textrm{when}\; \;
(x_0,y_0)\in\mathcal {D}_2.$$ It follows that, for $R=R_M$,
$$F^{\prime\prime}(\frac{-x_0k+y_0-1}{k})\leq
0\;\;\textrm{for}\;\;\textrm{any}\;\;\frac{1-y_0}{-x_0}\leq k\leq
\frac{y_0}{x_0+1}$$ when $(x_0,y_0)\in\mathcal {D}_2.$

In Lemma 3.4, for $R=R_0$, we know that
$$F^{\prime\prime}(\frac{-x_0+y_0-1}{y_0})\leq 0\;\;\textrm{for}\;\;(x_0,y_0)\in\mathcal
{D}_2,$$ from (3.41), which implies that
$$J(\frac{-x_0+y_0-1}{y_0})\leq
0\;\;\textrm{for}\;\;(x_0,y_0)\in\mathcal {D}_2.$$ Since $J(t)$ is
an increasing linear function and $$\frac{-x_0k+y_0-1}{k}\leq
\frac{-x_0+y_0-1}{y_0}\;\;\textrm{ for}\;\;k<\frac{y_0}{x_0+1},$$ we
have

$$J(\frac{-x_0k+y_0-1}{k})\leq 0\;\;\textrm{for}\;\;(x_0,y_0)\in\mathcal
{D}_2,$$ which implies, for $R=R_0$, that
$$F^{\prime\prime}(\frac{-x_0k+y_0-1}{k})\leq
0$$ for any $$\frac{1-y_0}{-x_0}\leq k\leq \frac{y_0}{x_0+1}\;\;
\textrm{and}\;\; (x_0,y_0)\in\mathcal {D}_2.$$

Therefore, for
$$V=V(R_0)\;\;\textrm{or}\;\;V=V(R_M),$$
we have
$$J(\frac{-x_0k+y_0-1}{k})\leq 0\;\;\textrm{for}\;\;(x_0,y_0)\in\mathcal {D}_2.$$

Since
\begin{eqnarray}
J(t)=[(y_0+2)y_0t+x_0(4y_0-1)]V+[\pi x_0(y_0-1)(y_0+2)t-\pi
x_0^2(2-y_0-y_0^2)],
\nonumber\\
\end{eqnarray}
which can be considered as a linear function of the variable $V$,
and
$$V(R_0)<V(R)<V(R_M),$$
we have, for any $V=V(R)$, that
\begin{eqnarray}
J(\frac{-x_0k+y_0-1}{k})\leq
0\;\;\textrm{for}\;\;(x_0,y_0)\in\mathcal {D}_2.
\end{eqnarray}
It follows that
\begin{eqnarray}
F^{\prime\prime}(\frac{-x_0k+y_0-1}{k})\leq
0\;\;\textrm{for}\;\;(x_0,y_0)\in\mathcal {D}_2.
\end{eqnarray}

\textbf{Third step.} \textit{We prove
$$\mathcal
{P}(R)\geq\min\{\mathcal {P}(R_1),\mathcal {P}(R_2)\}.$$ } We omit
the proof of this step which is similar to the proof of third step
in Lemma 3.4.

\end{proof}
\begin{lem}
For any a 1-unconditional polygon $P\subset [-1,1]^2$ in the
coordinate plane $XOY$ satisfying $B, D\in P$, let $R$ be an
origin-symmetric body of revolution generated by  $P$. Then
\begin{eqnarray}\mathcal {P}(R)\geq \frac{4\pi^2}{3},\end{eqnarray}
with equality if and only if $R$ is a cylinder or bicone.
\end{lem}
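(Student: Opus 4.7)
The plan is to prove Lemma 3.6 by induction on the number $m$ of interior vertices of $P$ in the second quadrant, invoking Lemmas 3.5 and 3.4 as reduction tools together with the polarity identity $\mathcal{P}(R)=\mathcal{P}(R^{\ast})$ from Lemma 3.2 to handle configurations where those lemmas do not apply directly. By Lemma 3.3 and the remark following it, we may assume $T\subseteq P\subseteq Q$; then the interior vertices of $P$ in the closed second quadrant are points $A_1,A_2,\ldots,A_m\in\triangle ABD$ which I order so that the slopes of $OA_i$ are increasing in $i$.

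The base cases are $m=0$ (so $P=T$ and $R$ is a bicone) and $m=1$ with $A_1=A$ (so $P=Q$ and $R$ is a cylinder); in both, a direct application of Lemma 2.1 yields $V(R)V(R^{\ast})=(2\pi/3)(2\pi)=4\pi^2/3$. For the inductive step I first treat the case $A_1\in\mathrm{int}(AB)$. If $m\geq 2$ and $A_2\in\mathrm{int}\,\triangle ABD$, Lemma 3.5 (or Lemma 3.4 when $m=2$) produces 1-unconditional polygons $P_1,P_2$ with $T\subseteq P_i\subseteq Q$ and strictly fewer interior vertices in the second quadrant, so the inductive hypothesis gives
\[
\mathcal{P}(R)\geq\min\{\mathcal{P}(R_1),\mathcal{P}(R_2)\}\geq \tfrac{4\pi^2}{3}.
\]
The remaining subcase, where $m=1$ and $A_1=(-t,1)$ is the unique interior vertex on $AB$, is handled by direct computation with Lemma 2.1: $V(R)=\tfrac{2\pi}{3}(2t+1)$ and $V(R^{\ast})=\tfrac{2\pi}{3}(t^2-3t+3)$, whence
\[
\mathcal{P}(R)-\tfrac{4\pi^2}{3}=\tfrac{4\pi^2}{9}\,t(1-t)(3-2t)\geq 0\quad\text{for } t\in[0,1],
\]
with equality only at $t=0$ (bicone) or $t=1$ (cylinder).

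If instead $A_1\notin AB$, I pass to the polar polygon $P^{\ast}$. Writing $A_1=(x_1,y_1)$, the edge $BA_1$ of $P$ lies on the line $\tfrac{1-y_1}{x_1}x+y=1$, so the corresponding vertex of $P^{\ast}$ (by Remark 2) is $\bigl(\tfrac{1-y_1}{x_1},1\bigr)$. Since $A_1$ cannot lie in the open segment $BD$ (otherwise it would fail to be extreme in $P$), we have $y_1-x_1>1$, which forces $\bigl|\tfrac{1-y_1}{x_1}\bigr|<1$, placing the vertex in $\mathrm{int}(AB)$. Hence the leading interior vertex of $P^{\ast}$ in the second quadrant sits on $AB$; applying the previous case to $P^{\ast}$ together with $\mathcal{P}(R)=\mathcal{P}(R^{\ast})$ yields $\mathcal{P}(R)\geq 4\pi^2/3$. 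The equality characterization follows by tracing the chain of minima backward: equality forces each reduction to terminate at either $T$ (bicone) or $Q$ (cylinder).

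The main obstacle I anticipate is establishing that the induction is well-founded when polarity is invoked, since $P^{\ast}$ may have strictly more interior vertices than $P$ in the second quadrant. The resolution is to induct on a composite measure, such as the lexicographic pair $(m(P),\mathbf{1}_{A_1\notin AB})$ or, more robustly, the number of edges of $P$ in the second quadrant not aligned with the sides of $Q$; either quantity is strictly decreased by a single polarity flip followed by one application of Lemma 3.5, so the process terminates at a base case after finitely many reduction steps.
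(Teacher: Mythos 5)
Your overall route is the paper's: reduce the generating polygon vertex by vertex via Lemmas 3.4/3.5, invoke $\mathcal{P}(R)=\mathcal{P}(R^{\ast})$ when the reduction is blocked near $B$, and finish at $T$ or $Q$; your explicit trapezoid identity $\mathcal{P}(R)-\frac{4\pi^2}{3}=\frac{4\pi^2}{9}\,t(1-t)(3-2t)$ is correct and is in fact a cleaner endgame than the paper's second pass through (3.4). The difficulty is in the reduction bookkeeping, and there are two concrete problems. First, coverage: your inductive step is only stated when $A_2\in\mathrm{int}\,\triangle ABD$ (as Lemmas 3.4/3.5 require all intermediate vertices in the open triangle), yet your own polarity step manufactures exactly the excluded configurations: whenever $D$ is a vertex of $P$, the polar $P^{\ast}$ has an edge on $x=-1$, so its last interior vertex lies on the segment $AD$. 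The simplest instance is the octagon whose quadrant piece is $\mathrm{conv}\{O,B,(-\frac{3}{10},\frac{4}{5}),D\}$: it has $m=1$ with its interior vertex strictly inside the triangle, so none of your direct cases applies, and its polar has interior vertices $(-\frac{2}{3},1)$ and $(-1,\frac{7}{8})$, so ``the previous case'' does not apply to $P^{\ast}$ either; polarity just flips you back and forth. You must either extend Lemmas 3.4/3.5 to admit vertices on $AD$ (the paper tacitly does this; note that $\mathcal{D}_1,\mathcal{D}_2$ include $x_0=-1$) or supply a separate argument for that configuration.

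Second, your termination claim is false as stated: neither proposed measure is strictly decreased by ``one polarity flip followed by one application of Lemma 3.5,'' because of the vertex-deletion branch $P_1$. For the octagon above (granting the extension of Lemma 3.4 to $A_2\in AD$), the pair $(m,\mathbf{1}_{A_1\notin AB})$ evolves $(1,1)\to(2,0)\to(1,1)$ along flip-then-delete; and for the polygon with quadrant vertices $B$, $(-\frac12,\frac{9}{10})$, $(-1,\frac35)$ (so $P$ has a vertical edge and $D$ is not a vertex of $P$), the number of non-axis-aligned edges in the quadrant evolves $2\to 2\to 2$. So an induction organized on either quantity cannot legitimately invoke its hypothesis after the deletion branch. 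The quantity that does work is the total number of vertices of the generating polygon (equivalently $2m+\beta+\delta$, where $\beta,\delta$ record whether $B,D$ are vertices): it is preserved by polarity, since vertices of $P^{\ast}$ correspond to edges of $P$, and it strictly drops in both branches of Lemma 3.5 --- this is exactly the paper's bookkeeping (``$P$ has the same number of vertices as $P^{\ast}$''). With that measure and with the $AD$-boundary case addressed, your argument goes through and coincides with the paper's proof.
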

\begin{figure}[htb]
\centering
 \includegraphics[height=10cm]{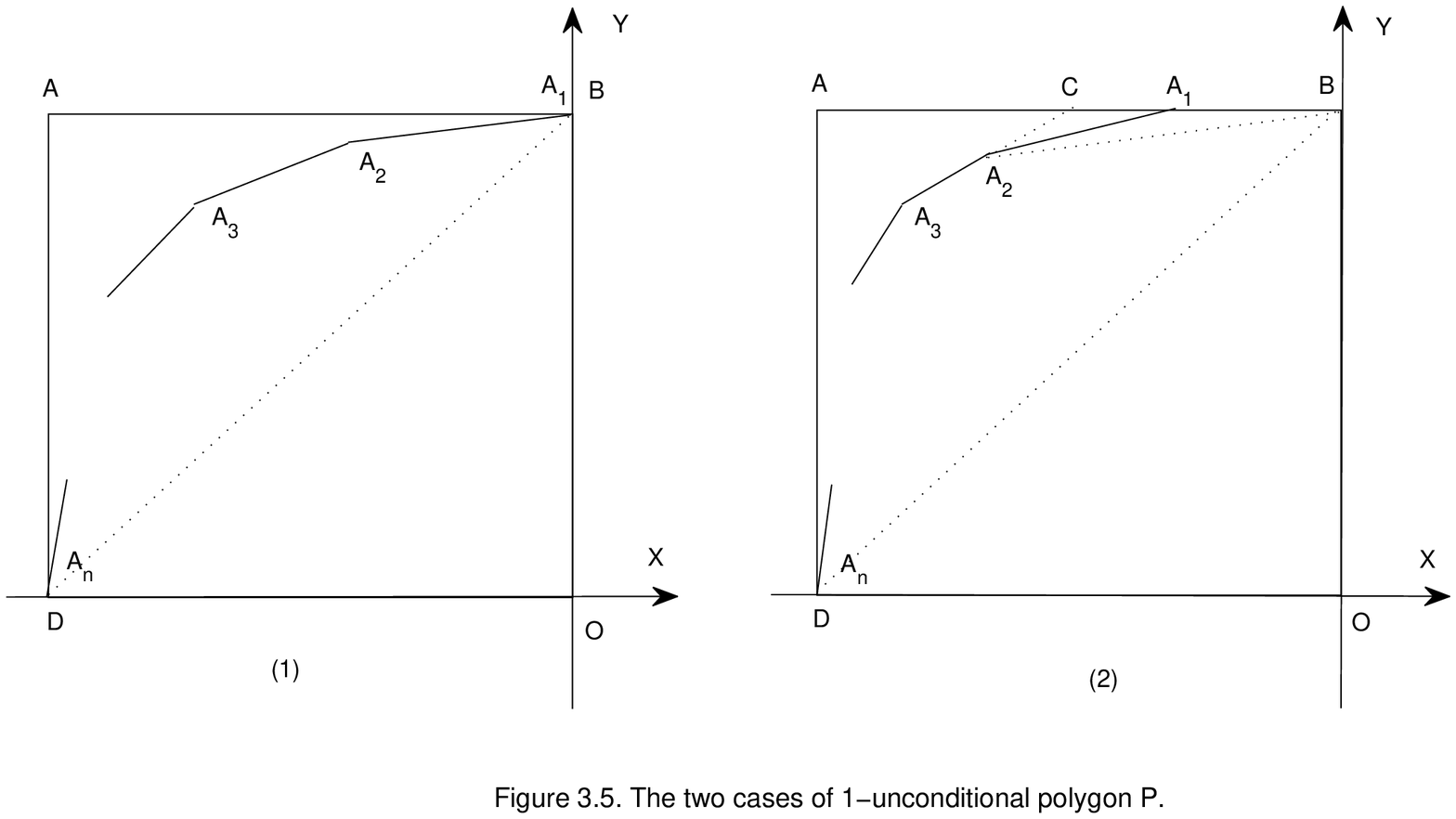 }
\end{figure}
\begin{proof}

Let $A_1, A_2, \cdots, A_n$ be the vertices of $P$ contained in the
domain $\{(x,y): x\leq 0, y\geq 0\}$ and the slopes of lines $OA_i$
($i=1,\cdots n$) are increasing on $i$. Without loss of generality,
suppose that the vertex $A_n$  coincides with point $D$. The vertex
$A_1$ satisfies the following two cases:

(i) $A_1$ coincides with the point $B$;

(ii) $A_1$ does not coincide with the point $B$, but lies on the
line segment $BC$ (C is the point of intersection between two lines
$A_2A_3$ and $AB$).

If $R$ satisfies the case (ii), from the Lemma 3.5, we obtain an
origin-symmetric body of revolution $R_1$ with smaller Mahler volume
than $R$ and its generating domain $P_1$ has fewer vertices than
$P$.

If $R$ satisfies the case (i), then its polar body $R^{\ast}$
satisfies the case (ii). Since $\mathcal {P}(R)=\mathcal
{P}(R^{\ast})$ and $P$ has the same number of vertices as
$P^{\ast}$, from the Lemma 3.5,  we can also obtain an
origin-symmetric body of revolution $R_1$ with smaller Mahler volume
than $R$ and its generating domain $P_1$ has fewer vertices than
$P$.

From the above discuss and the proof of (3.5), let $R_0=R$, we can
get a sequence of origin-symmetric bodies of revolution
$$\{R_0, R_1, R_2\cdots, R_N\},$$ where $N$ is a natural number
depending on the number of vertices of $P$, satisfying $\mathcal
{P}(R_{i+1})\leq \mathcal {P}(R_i)$ ($i=0,1,\cdots, N-1$) and $R_N$
is a cylinder or bicone. Therefore, we have
\begin{eqnarray*}\mathcal {P}(R)\geq \frac{4\pi^2}{3},\end{eqnarray*}
with equality if and only if $R$ is a cylinder or bicone.

\end{proof}

\begin{thm} For any origin-symmetric body of revolution $K$ in $
\mathbb{R}^3$, we have
\begin{eqnarray}
\mathcal {P}(K)\geq \frac{4\pi^2}{3},
\end{eqnarray}
with equality if and only if $K$ is a cylinder or bicone.
\end{thm}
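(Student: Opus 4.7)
My plan is to derive Theorem~3.7 from the polygonal case established in Lemma~3.6 by a polygonal approximation argument. By Lemma~3.3 and the affine invariance of the Mahler volume, I may assume that the generating domain $P$ of $R=K$ is a 1-unconditional convex body satisfying $T\subset P\subset Q$ in the notation of Remark~2. In particular $B=(0,1)$ and $D=(-1,0)$ belong to $P$, and $P\subset[-1,1]^2$.

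\textbf{Approximation and passage to the limit.} I would construct a sequence of 1-unconditional convex polygons $P_n\subset[-1,1]^2$ with $\{\pm B,\pm D\}\subset P_n$ and $P_n\to P$ in the Hausdorff metric; for example, pick $n$ points on $\partial P\cap\{x<0,\,y>0\}$, chosen so as to become dense in this arc as $n\to\infty$, and take $P_n$ to be the convex hull of these points together with $\pm B$, $\pm D$ and the mirror images of the chosen points. Let $R_n$ denote the origin-symmetric body of revolution generated by $P_n$. Revolving a 1-unconditional compact planar set about the $X$-axis is continuous in the Hausdorff metric, so $R_n\to R$ in $\mathcal{K}^3_o$; combined with the continuity of $V(\cdot)$ and of the polarity map on $\mathcal{K}^3_o$, this yields $\mathcal{P}(R_n)\to\mathcal{P}(R)$. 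Lemma~3.6 gives $\mathcal{P}(R_n)\geq 4\pi^2/3$, and passing to the limit yields $\mathcal{P}(R)\geq 4\pi^2/3$.

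\textbf{Equality case and main obstacle.} The equality characterization is the principal difficulty. Direct computation confirms $\mathcal{P}(R)=4\pi^2/3$ when $R$ is a cylinder or bicone. For the converse, I would argue by contradiction: suppose $\mathcal{P}(R)=4\pi^2/3$ but $P$ is neither $T$ nor $Q$. Then the generating function $f$ is neither constant nor of the form $1-|x|$, so $\partial P$ contains a point $A_2$ strictly inside $\triangle ABD$. A natural approach is to upgrade the reduction inequality (3.33) of Lemma~3.5 to a \emph{strict} inequality whenever $A_2\in\mathrm{int}\,\triangle ABD$: inspecting the factorizations that lead to (3.21) and (3.26), one expects that the polynomials $G(x_0,y_0)$ and $H(x_0,y_0)$ are strictly negative on the open portion of the admissible region, so that $F''(t)<0$ strictly and the concavity bound $F(t)>\min\{F(0),F(t_0)\}$ holds at interior $t$. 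Combined with a circumscribing polygonal approximation $\tilde P_n\supset P$ with $\tilde P_n\to P$ and each $\tilde P_n$ having $A_2$ as an interior vertex, this should produce a uniform gap $\mathcal{P}(\tilde R_n)\geq 4\pi^2/3+\delta$ for some $\delta>0$, contradicting $\mathcal{P}(R)=\lim\mathcal{P}(\tilde R_n)=4\pi^2/3$. Making the stability precise --- in particular, obtaining a uniform $\delta>0$ that survives the approximation --- is the main technical obstacle I anticipate.
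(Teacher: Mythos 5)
Your proof of the inequality part is exactly the paper's argument: normalize via Lemma 3.3 and Remark 3 so that $T\subset P\subset Q$, approximate the generating domain by 1-unconditional polygons containing $B$ and $D$, use Hausdorff-continuity of the volume product, and invoke Lemma 3.6. No further comment is needed there.

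The one divergence is the equality case, which you leave unfinished, and you are right to flag it: equality characterizations do not pass through Hausdorff limits, so from $\mathcal{P}(R_n)\geq 4\pi^2/3$ (with equality only for polygonal cylinders/bicones) one cannot directly conclude the characterization for a general $K$. Your proposed repair --- showing the reduction in Lemma 3.5 is \emph{strict} whenever a vertex lies in $\mathrm{int}\,\triangle ABD$, then running a circumscribed approximation with a uniform gap $\delta>0$ --- is a plausible route, but as written it is a plan rather than a proof; the uniformity of $\delta$ under approximation is precisely the point you have not established. To be fair, the paper does no better: its proof of this theorem simply appends the equality statement to the limiting argument, implicitly relying on Lemma 3.6 without addressing the passage to the limit. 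So your proposal matches the paper on the inequality and is more candid, though not more complete, about what the equality case actually requires.
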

\begin{proof}

By Remark 3, without loss of generality,  suppose that the
generating domain $P$ of $K$ is contained in the square $[-1,1]^2$
and $B, D\in P$.

Since a convex body can be approximated by a polytope in the sense
of the Hausdorff metric (see Theorem 1.8.13 in \cite{Sc93}), hence,
for $P$ and any $\varepsilon>0$, there is a 1-unconditional polygon
$P_{\varepsilon}$ with $\delta(P,P_{\varepsilon})\leq \varepsilon$.
Let $R_{\varepsilon}$ be an origin-symmetric body of revolution
generated by $P_{\varepsilon}$, then $\delta(K, R_{\varepsilon})\leq
\varepsilon.$ Thus, there exists  a sequence of origin-symmetric
bodies of revolution $(R_i)_{i\in \mathbb{N}}$ satisfying
$$\lim _{i\rightarrow \infty}\delta(R_i, K)=0.$$
Since $\mathcal {P}(K)$ is continuous in the sense of the Hausdorff
metric, applying Lemma 3.6, we have

\begin{eqnarray}
\mathcal {P}(K)\geq \frac{4\pi^2}{3},
\end{eqnarray}
with equality if and only if $K$ is a cylinder or bicone.
\end{proof}

%
%
%
%
%
%
%
In the following, we will restate and prove Theorem 1.2 and 1.3.

\begin{thm} Let $f(x)$ be a concave, even and nonnegative
function defined on $[-a,a]$, $a>0$, and for $x^{\prime}\in
[-\frac{1}{a},\frac{1}{a}]$ define
\begin{eqnarray}
f^{\ast}(x^{\prime})=\inf_{x\in[-a,a]}\frac{1-x^{\prime}x}{f(x)}.
\end{eqnarray} Then
\begin{eqnarray}
\left(\int_{-a}^{a} (f(x))^2 dx\right)\left(
\int_{-\frac{1}{a}}^{\frac{1}{a}} (f^{\ast}(x^{\prime}))^2
dx^{\prime}\right)\geq\frac{4}{3},
\end{eqnarray}
with equality if and if $f(x)=f(0)$ or
$f^{\ast}(x^{\prime})=1/f(0)$.
\end{thm}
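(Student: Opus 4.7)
The plan is to reduce this functional inequality directly to the geometric inequality of Theorem 3.1, by realizing both factors as (scaled) volumes of an origin-symmetric body of revolution and its polar dual. Given that Theorem 3.1 has already been established, the proof is essentially one line once the correct identifications are set up.

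First, I would associate to $f$ the planar generating domain $D = \{(x,y) : -a \leq x \leq a,\ |y| \leq f(x)\}$ in the XOY-plane, and let $R \subset \mathbb{R}^3$ be the origin-symmetric body of revolution obtained by rotating $D$ about the $X$-axis. Since the cross-section of $R$ perpendicular to $v = (1,0,0)$ at abscissa $x$ is a disk of radius $f(x)$, Fubini yields
$$V(R) = \pi \int_{-a}^{a} f(x)^2 \, dx.$$

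Second, I would identify the polar body $R^{\ast}$ as the body of revolution generated by the planar polar $D^{\ast}$. Since $D$ is a 1-unconditional planar convex body, Lemma 3.2 applies directly and gives $R^{\ast}$ as the body of revolution with generating domain $D^{\ast}$. A point $(x',y')$ lies in $D^{\ast}$ iff $x'x + y'y \leq 1$ for every $(x,y) \in D$, i.e., iff $x'x + |y'|f(x) \leq 1$ for every $x \in [-a,a]$. For $x' \in [-1/a, 1/a]$ the quantity $1-x'x$ is nonnegative on $[-a,a]$, and the constraint reduces to
$$|y'| \leq \inf_{x \in [-a,a]} \frac{1 - x'x}{f(x)} = f^{\ast}(x'),$$
while for $|x'| > 1/a$ the constraint fails at an endpoint, so such $(x',y')$ are excluded. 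One checks that $f^{\ast}$ is concave (infimum of affine functions of $x'$), even (by the evenness of $f$), and nonnegative on $[-1/a, 1/a]$, so it is a legitimate generating function and $D^{\ast}$ is precisely its generating domain. The same Fubini computation then gives
$$V(R^{\ast}) = \pi \int_{-1/a}^{1/a} f^{\ast}(x')^2 \, dx'.$$

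Multiplying the two volume formulas gives
$$\mathcal{P}(R) = V(R)\,V(R^{\ast}) = \pi^2 \left(\int_{-a}^{a} f(x)^2 \, dx\right)\left(\int_{-1/a}^{1/a} f^{\ast}(x')^2 \, dx'\right),$$
so Theorem 3.1's bound $\mathcal{P}(R) \geq 4\pi^2/3$ is, after dividing by $\pi^2$, exactly the claimed inequality. For the equality case, Theorem 3.1 forces $R$ to be a cylinder or a bicone; a cylinder corresponds to $f \equiv f(0)$, and the dual-domain computation in the previous paragraph shows that a bicone (diamond $D$) dualizes to a rectangular $D^{\ast}$, i.e.\ $f^{\ast} \equiv 1/f(0)$. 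This matches the stated equality conditions. I do not foresee any genuine obstacle, as Theorem 3.1 does all the real work; the only minor bookkeeping is the endpoint verification that $D^{\ast}$ is supported exactly on $x' \in [-1/a, 1/a]$, which is immediate.
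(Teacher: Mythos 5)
Your proposal is correct and follows essentially the same route as the paper's own proof of Theorem 3.8: you realize the two integrals as $V(R)/\pi$ and $V(R^{\ast})/\pi$ by verifying that the generating domain of $f^{\ast}$ is exactly $D^{\ast}$, invoke Lemma 3.2 to identify the rotated polar domain with $R^{\ast}$, and then apply the geometric bound $\mathcal{P}(R)\geq 4\pi^2/3$ (Theorem 3.7) together with its cylinder/bicone equality case. The only difference is cosmetic: you phrase the identification $D^{\ast}=D'$ via the support condition $x'x+|y'|f(x)\leq 1$, whereas the paper checks the two inclusions pointwise.
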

\begin{proof}
Let $R$ and $R^{\prime}$ be origin-symmetric bodies of revolution
generated by $f(x)$ and $f^{\ast}(x^{\prime})$, respectively, then
their generating domains are
$$D=\{(x,y): -a\leq x\leq a, |y|\leq f(x)\}$$
and
$$D^{\prime}=\{(x^{\prime},y^{\prime}): -\frac{1}{a}\leq x^{\prime}\leq \frac{1}{a},|y^{\prime}|\leq f^{\ast}(x^{\prime})\},$$
respectively.

Next, we prove $D^{\prime}=D^{\ast}$. For
$(x^{\prime},y^{\prime})\in D^{\prime}$ and $(x,y)\in D$, we have
$$
(x^{\prime},y^{\prime})\cdot(x,y) =x^{\prime}x+y^{\prime}y\leq
x^{\prime}x+f^{\ast}(x^{\prime})f(x) \leq
x^{\prime}x+\frac{1-x^{\prime}x}{f(x)}f(x)=1,$$ which implies
$(x^{\prime},y^{\prime})\in D^{\ast}$. If
$(x^{\prime},y^{\prime})\notin D^{\prime}$, then either
$|x^{\prime}|>\frac{1}{a}$ or $|x^{\prime}|\leq\frac{1}{a}$ and
$|y^{\prime}|>f^{\ast}(x^{\prime})$. If $x^{\prime}>\frac{1}{a}$ (or
$x^{\prime}<-\frac{1}{a}$), then for $(a,0)\in D$ (or $(-a,0)\in
D$), we have
$$(x^{\prime},y^{\prime})\cdot(a,0)>1\;\;(\textrm{or}\;\;(x^{\prime},y^{\prime})\cdot(-a,0)>1),$$
which implies $(x^{\prime},y^{\prime})\notin D^{\ast}$. If
$|x^{\prime}|\leq\frac{1}{a}$ and $y^{\prime}>f^{\ast}(x^{\prime})$
(or $y^{\prime}<-f^{\ast}(x^{\prime})$), let
$$f^{\ast}(x^{\prime})=\frac{1-x^{\prime}x_0}{f(x_0)},$$
then for $(x_0, f(x_0))\in D$ (or $(x_0, -f(x_0))\in D$), we have
$$(x^{\prime},y^{\prime})\cdot(x_0, f(x_0))>x^{\prime}x_0+f^{\ast}(x^{\prime})f(x_0)=1$$
$$(\textrm{or}\;\;(x^{\prime},y^{\prime})\cdot(x_0, -f(x_0))>x^{\prime}x_0+f^{\ast}(x^{\prime})f(x_0)=1),$$
which implies $(x^{\prime},y^{\prime})\notin D^{\ast}$. Hence, we
have $D^{\prime}=D^{\ast}$. By Lemma 3.2, we get
$R^{\prime}=R^{\ast}$. By Theorem 3.7, we have
\begin{eqnarray*}
\int_{-a}^{a}(f(x))^2dx
\int_{-\frac{1}{a}}^{\frac{1}{a}}(f^{\ast}(x^{\prime}))^2dx^{\prime}
=\frac{1}{\pi^2}V(R)V(R^{\prime})=\frac{1}{\pi^2}\mathcal {P}(R)
\geq\frac{4}{3},
\end{eqnarray*}
with equality if and if $f(x)=f(0)$ or
$f^{\ast}(x^{\prime})=1/f(0)$.
\end{proof}
By Theorem 3.8, we prove that among parallel sections homothety
bodies in $\mathbb{R}^3$, 3-cubes have the minimal Mahler volume.

\begin{thm}For any parallel sections
homothety body $K$ in $\mathbb{R}^3$, we have
\begin{eqnarray}
\mathcal {P}(K)\geq \frac{4^3}{3!},
\end{eqnarray}
with equality if and only if $K$ is a 3-cube or octahedron.
\end{thm}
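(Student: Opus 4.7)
The plan is to reduce the claim to a product of the two-dimensional Mahler inequality (for $C$ in the slicing plane) and the functional inequality of Theorem 3.8 (for the generating profile $f$). The key structural observation is that the polar of a parallel sections homothety body is again a parallel sections homothety body, with $C$ replaced by $C^{\ast}$ and $f$ replaced by $f^{\ast}$.

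First I would compute $V(K)$ by Fubini: since each slice orthogonal to the $X$-axis at height $x\in[-a,a]$ is the homothet $f(x)C$ of area $f(x)^{2}V_{2}(C)$, one obtains
\begin{equation*}
V(K) \;=\; V_{2}(C)\int_{-a}^{a} f(x)^{2}\,dx .
\end{equation*}
Next I would identify $K^{\ast}$. A point $(x',y',z')$ lies in $K^{\ast}$ iff $x'x+(y',z')\cdot u\le 1$ for every $x\in[-a,a]$ and $u\in f(x)C$; maximizing over $u$ gives $x'x+f(x)h_{C}(y',z')\le 1$ for every $x$, hence $h_{C}(y',z')\le f^{\ast}(x')$, i.e.\ $(y',z')\in f^{\ast}(x')\,C^{\ast}$ (and $x'\in[-1/a,1/a]$). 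Therefore
\begin{equation*}
K^{\ast} \;=\; \bigcup_{x'\in[-1/a,1/a]} \bigl\{ f^{\ast}(x')C^{\ast}+x'v\bigr\},
\end{equation*}
which is again a parallel sections homothety body. The same Fubini computation then yields $V(K^{\ast})=V_{2}(C^{\ast})\int_{-1/a}^{1/a}(f^{\ast})^{2}\,dx'$.

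Multiplying the two volume formulas factors the Mahler volume completely:
\begin{equation*}
\mathcal{P}(K)\;=\;\bigl(V_{2}(C)\,V_{2}(C^{\ast})\bigr)\cdot\left(\int_{-a}^{a}f(x)^{2}\,dx\right)\!\left(\int_{-1/a}^{1/a}f^{\ast}(x')^{2}\,dx'\right).
\end{equation*}
The first factor is at least $8=4^{2}/2!$ by Mahler's two-dimensional theorem for origin-symmetric convex bodies (Mahler--Reisner), with equality iff $C$ is a parallelogram. The second factor is at least $4/3$ by Theorem~3.8, with equality iff $f\equiv f(0)$ or $f^{\ast}\equiv 1/f(0)$. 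Combining gives $\mathcal{P}(K)\ge 8\cdot\tfrac{4}{3}=\tfrac{4^{3}}{3!}$.

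For the equality case I would just chase the two equality characterizations simultaneously. Equality in the 2D Mahler bound forces $C$ to be an affine parallelogram; equality in Theorem~3.8 forces $f$ to be constant (producing $K$ an affine $3$-cube, i.e.\ a $3$-cube in the relevant Banach--Mazur class), or $f^{\ast}$ constant, which corresponds to $f$ being a linear function vanishing at $\pm a$ and gives $K$ an affine octahedron. I expect the main technical step to be the identification $K^{\ast}=\bigcup\{f^{\ast}(x')C^{\ast}+x'v\}$; once that is in hand, everything else is a direct appeal to the two inputs already established. The verification of the polar body formula is not deep, but it requires care at the boundary points $x'=\pm 1/a$ (where $f^{\ast}=0$) and with the convention for $f(x)=0$ in the definition of $f^{\ast}$.
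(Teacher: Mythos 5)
Your proposal is correct and follows essentially the same route as the paper: identify $K^{\ast}$ as the parallel sections homothety body built from $C^{\ast}$ and $f^{\ast}$, factor $\mathcal{P}(K)=\mathcal{P}(C)\bigl(\int_{-a}^{a}f^{2}\bigr)\bigl(\int_{-1/a}^{1/a}(f^{\ast})^{2}\bigr)$, and apply the planar Mahler--Reisner bound together with Theorem~3.8. The only difference is cosmetic: you verify $K^{\ast}=\bigcup\{f^{\ast}(x')C^{\ast}+x'v\}$ via support functions, while the paper checks the two set inclusions pointwise.
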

\begin{proof}
Let
$$K=\bigcup_{x\in[-a,a]}\{f(x)C+xv\},$$
where $f(x)$ is its generating function and $C$ is homothetic
section. Next, for
$$K^{\prime}=\bigcup_{x^{\prime}\in[-\frac{1}{a},\frac{1}{a}]}\{f^{\ast}(x^{\prime})C^{\ast}+x^{\prime}v\},$$
where $f^{\ast}(x^{\prime})$ is given in (3.62), we prove
$K^{\prime}=K^{\ast}$. For any
$$(x^{\prime},y^{\prime},z^{\prime})\in K^{\prime}\;\;
\textrm{and}\;\; (x,y,z)\in K,$$ we have
$$(0,y^{\prime},z^{\prime})\in f^{\ast}(x^{\prime})C^{\ast}\;\;\textrm{and}\;\;(0,y,z)\in f(x)C.$$
Hence, we have
$$(0,y^{\prime},z^{\prime})\cdot(0,y,z)\leq f^{\ast}(x^{\prime})f(x)\leq \frac{1-x^{\prime}x}{f(x)}f(x)=1-x^{\prime}x.$$
It follows that
$$(x^{\prime},y^{\prime},z^{\prime})\cdot(x,y,z)=x^{\prime}x+(0,y^{\prime},z^{\prime})\cdot(0,y,z)\leq 1,$$
which implies that $(x^{\prime},y^{\prime},z^{\prime})\in K^{\ast}$.

If $(x^{\prime},y^{\prime},z^{\prime})\notin K^{\prime}$, then
either $|x^{\prime}|>\frac{1}{a}$ or $|x^{\prime}|\leq\frac{1}{a}$
and $(0,y^{\prime},z^{\prime})\notin f^{\ast}(x^{\prime})C^{\ast}$.
If $x>\frac{1}{a}$ (or $x<-\frac{1}{a}$ ), then for $(a,0,0)\in K$
(or $(-a,0,0)\in K$), we have
$$(x^{\prime},y^{\prime},z^{\prime})\cdot(a,0,0)>1\;\;(\textrm{or}\;\;(x^{\prime},y^{\prime},z^{\prime})\cdot(-a,0,0)>1),$$
which implies that $(x^{\prime},y^{\prime},z^{\prime})\notin
K^{\ast}$. If $|x^{\prime}|\leq\frac{1}{a}$ and
$(0,y^{\prime},z^{\prime})\notin f^{\ast}(x^{\prime})C^{\ast}$,
there exists $(0,y,z)\in C$ such that
$$(0,y,z)\cdot
(0,y^{\prime},z^{\prime})>f^{\ast}(x^{\prime}).$$ Let
$$f^{\ast}(x^{\prime})=\frac{1-x^{\prime}x_0}{f(x_0)}.$$ For $$(x_0,f(x_0)y,f(x_0)z)\in K$$
 we have
\begin{eqnarray}
&&(x^{\prime},y^{\prime},z^{\prime})\cdot(x_0,f(x_0)y,f(x_0)z)
\nonumber\\
&=&x^{\prime}x_0+f(x_0)(0,y,z)\cdot(0,y^{\prime},z^{\prime})
\nonumber\\
&>&x^{\prime}x_0+f(x_0)f^{\ast}(x^{\prime})
\nonumber\\
&=&x^{\prime}x_0+f(x_0)\frac{1-x^{\prime}x_0}{f(x_0)}
\nonumber\\
&=&1,
\end{eqnarray}
which implies that $(x^{\prime},y^{\prime},z^{\prime})\notin
K^{\ast}$. Hence, we have $K^{\prime}=K^{\ast}$.

Therefore, we obtain
\begin{eqnarray}
\mathcal {P}(K) &=&V(K)V(K^{\prime})
\nonumber\\
&=&\mathcal {P}(C)\int_{-a}^{a}(f(x))^2dx
\int_{-\frac{1}{a}}^{\frac{1}{a}}(f^{\ast}(x^{\prime}))^2dx^{\prime}
\nonumber\\
&\geq& \frac{4^2}{2!}\frac{4}{3}=\frac{4^3}{3!},
\end{eqnarray}
with equality if and only if $K$ is a 3-cube or octahedron.
\end{proof}


\bibliographystyle{amsalpha}

\end{document}